\newtheorem{De}{Definition}[section]
\newtheorem{Pro}[De]{Proposition}
\newtheorem{Le}[De]{Lemma}
\newtheorem{Co}[De]{Corollary}
\newcommand{\xto}[1]{\xrightarrow[]{#1}}
\newcommand\id{{\sf Id}}
\newcommand\bu{{\bullet}}
\newcommand\Cat{\mathsf{Cat}}
\newcommand\sC{\mathsf{C}}
\newcommand\sA{\mathsf{A}}
\newcommand\sK{\mathsf{K}}
\newcommand\bC{\bf{C}}
\newcommand\bGr{\bf{Gr}}
\newcommand\bMon{\bf{Mon}}
\newcommand\bA{\bf{A}}
\newcommand\Xmod{\bf{Xmod}}
\newcommand\Xbsmod{\bf{Xbsmod}}
\newcommand\Xsmod{\bf{Xsmod}}
\newcommand\Qu{\mathsf{Qu}}
\newcommand\Xbsmodgr{\Xbsmod_{\bGr}}
\renewcommand\1{^{-1}}
\begin{document}

\title[Internal categories and coressed objects in $\bMon$]
{On internal categories and crossed objects in the category of monoids}

\author{Ilia Pirashvili}

\maketitle

\begin{abstract}
  It is a well-known fact that the category $\Cat(\bC)$ of internal categories
  in a category $\bC$ has a description in terms of crossed modules, when
  $\bC=\bGr$ is the category of groups. The proof of this result heavily uses
  the fact that any split epimorphism decomposes as a semi-direct product. An
  equivalent statement does not hold in the category $\bMon$ of monoids. In a
  previous work on quadratic algebras, \cite{p5} I constructed an internal
  category in the category of monoids, see Section \ref{6quad}. Based on this
  construction, this paper will introduce the notion of a crossed semi-bimodule
  and show that it gives rise to an object in $\Cat(\bMon)$. I will also relate
  this new notion to the crossed semi-modules introduced earlier by A.
  Patchkoria \cite{APach}.
\end{abstract}

\section{Introduction}

The notion of a crossed module was introduced by Whitehead in his study of
homotopy theory \cite{w}. A crossed module is a group homomorphism
$\partial:K\to A$, together with an action of $A$ on the group $K$, satisfying
some conditions. Starting with such data, one can construct the category
$\Cat(\partial:K\to A)$. Objects are elements of $A$, while morphisms from $b$
to $a$ are essentially elements $x$ of $K$, such that $b=\partial(x)a$ and the
composition is induced by the multiplication in $K$.

One can readily checks that $\Cat(\partial:K\to A)$ is an internal category in
the category of groups, meaning that all structural maps are group
homomorphisms. It is well-known that in this way, one obtain an isomorphism of
the category of internal categories in the category of groups, and the category
of crossed modules \cite{maclane}.

This paper will introduce the notion of a crossed semi-bimodule, which is pair
of monoids $A,K$, and triple of maps $A\times K\xto{\circ}A$, $A\times
K\xto{\lambda} K$, $K\times A\xto{\rho} K$, satisfying certain conditions. We
will show that one can construct a category $\Cat(A,K,\circ)$ from such a data,
which is internal in the category of monoids. 

We will show that crossed semi-bimodules for which $\lambda$ is a trivial
homomorphism, is equivalent to the category of crossed semi-modules introduced
by A. Patchkoria \cite{APach}. We will also show that in the cases when both
monoids $A$ and $K$ are groups, our notion is essentially equivalent to a
crossed module. 

In the last section, we will describe a particular crossed module, which arose
during my work on quadratic étale algebras \cite{p5}. It is an example
which does not comes from crossed semi-modules.

\section{Crossed (semi-)modules and crossed semi-bimodules}

\subsection{Internal categories}

Let $\bA$ be a category with finite limits. Recall that an internal category
$\sC$ in $\bA$ is the following data:
\begin{itemize}
  \item [i)] A pair of objects $C_0$ and $C_1$ of $\bA$, called, respectively,
    the \emph{object of objects} and \emph{object of morphisms} of $\sC$.
  \item [ii)] Four morphisms $d^1_0, d^1_1:C_1\to C_0$, $s^0_0:C_0\to C_1$ and
    $d^2_1:C_2\to C_1$, called, respectively, the \emph{target}, \emph{source},
    \emph{unit} and \emph{composite}. Here, $C_2$ is the following pullback:
    \[\xymatrix{
      C_2\ar[r]^{d^2_2}\ar[d]_{d^2_0} &C_1\ar[d]^{d^1_0}\\C_1\ar[r]_{d^1_1}&C_0.
    }\]
    The morphisms $d_0^1,d_1^1,d_1^2$ and $s_0^0$ satisfy the standard
    simplicial identities. 
 \end{itemize}
 
\subsection{Crossed modules and internal categories in $\bGr$} \label{cm}

Let $\bGr$ be the category of groups and group homomorphisms. It was known since
the 60's that the internal categories in $\bGr$ are equivalent to crossed
modules. Crossed modules were introduced by Whitehead \cite{w} in the study of
homotopy theory. 

\begin{De}\label{121904}
  A \emph{crossed module} is a group homomorphism $\partial:K\to A$, together
  with a right action of the group $A$ on the group $K$, denoted by $x^a$, where
  $x\in K$, $a\in A$. It must satisfy the following equalities
  \begin{enumerate}
    \item $\partial(x^a)=a^{-1}\partial(x)a$,
    \item $y^{\partial x}=x^{-1} yx$
  \end{enumerate}
  for all $x,y\in K$ and $a\in A$.
\end{De}

Denote by $\Xmod$ the category of crossed modules. From this, we can construct
an internal category in $\bGr$ in the following way:

Let $(K,A,\partial)$ be a crossed module. Define $C_0=A$ and $C_1$ to be
$A\times K$ as a set. The group structure is defined by
\[(a,x)(b,y)=(ab,x^b\, y).\]
This construction is compare with \cite{maclane}.

The group $C_2$ is $A\times K\times K$ as a set with the following operation
\[(a,x,y)(b,u,v)=(ab, x^bu,y^{b\partial u}v).\]
Next, we put
\[s^0_0(a)=(a,1),\ d_i^1(a,x)=
  \begin{cases} a, & {\rm if} \quad i=0\\
    a\partial(x), & {\rm if} \quad i=1
  \end{cases}\]
\[s^1_i(a,x)=
  \begin{cases} (a,x,1) & {\rm if} \quad i=0\\
    (a,1,x) & {\rm if}\quad i=0\end{cases}\]
\[d^2_i(a,x,y)=
  \begin{cases} (a,x) & {\rm if} \quad i=0\\
    (a,xy) & {\rm if} \quad i=1\\
  (a\partial (x),y) & {\rm if} \quad i=0\end{cases}\]
\[s^1_i(a,x)=
  \begin{cases} (a,x,1) & {\rm if} \quad i=0\\
    (a,1,x) & {\rm if}\quad i=1.\end{cases}\]
One checks that all maps are group homomorphisms and in this way, obtains an
internal category in $\sf Gr$. This is denoted by $\Cat(K\xto{\partial} A)$. It
is well-known that any internal category in $\sf Gr$ is of this form, and the
functor
\[{\Xmod}\to\Cat(\bGr), \quad (K,A,\partial)\mapsto\Cat(K\xto{\partial} A)\]
is an equivalence of categories \cite{maclane}.

\subsection{Crossed semi-modules}

A. Patchkoria \cite{APach} introduced the notion of a crossed semi-module and
proved that the category of crossed semi-modules is equivalent to the category
of Schreier categorical objects in the category of monoids. 

\begin{De}
  A crossed semi-module \cite{APach} is a monoid homomorphism $\partial:K\to A$,
  together with a right action of the monoid $A$ on the monoid $K$, denoted by
  $x^a$, $x\in K$, $a\in A$. It must satisfy the following equalities
	\begin{itemize}
		\item [i)] $a\partial(x^a)=\partial(x)a$,
		\item [ii)] $yx^{\partial y}=xy$
	\end{itemize}
	for all $a\in A$, $x,y\in K$.
\end{De}

By defining morphisms in the analogous way to groups, we can form a category,
which we will denote by $\Xsmod$.

Starting from a crossed semi-module, one can construct an internal category in
$\bMon$, the category of monoids, in the same exact way as the crossed modules.

\section{Crossed semi-bimodules and internal categories in the category of
monoids}

\subsection{Definition and first consequences}

The following definitions are probably more appropriate for monoids than crossed
semi-modules.

\begin{De}\label{mu}
  A crossed semi-bimodule consists of monoids $A$ and $K$, together with maps
  \[A\times K\xto{\circ} A;\quad (a,x)\mapsto a\circ x,\]
  \[A\times K\xto{\lambda} K;\quad (a,x)\mapsto\ ^ax,\]
  \[K\times A\xto{\rho} K;\quad (x,a)\mapsto x^a.\]
  The first map makes $A$ into a right $K$-set, the second map makes $K$ into a
  left $A$-monoid and the third map makes $K$ into a right $A$-monoid.
  Moreoever, the following conditions need to hold for every $x,y\in K$ and
  $a,b\in A$:
	\begin{enumerate}
		\item $(^ax)^b=\ ^a(x^b)$,
		\item $(ab)\circ(^ax)=a(b\circ x)$,
		\item $(ab)\circ(x^b)=(a\circ x)b$,
		\item $(^ay)x^{b\circ y}=x^b(^{a\circ x}y).$
	\end{enumerate}
\end{De}

For simplicity, instead of saying that$(K,A,\circ,\lambda,\rho)$ is a crossed
semi-bimodule, we will say that $(K,A,\circ)$ is a crossed semi-bimodule

\begin{De}
  A morphism from a crossed semi-bimodule $(K,A,\circ)$ to a crossed
  semi-bimodule $(K',A',\circ)$ is given by the homomorphisms $\kappa:K\to K'$
  and $\alpha:A\to A'$, for which the following conditions hold
  \begin{enumerate}
    \item $\alpha(a\circ x)=\alpha(a)\circ\kappa(x)$,
    \item $\kappa(^ax)=\ ^{\alpha(a)}\kappa(x)$,
    \item $\kappa(x^a)=\kappa(x)^{\alpha(a)}$.
  \end{enumerate}
\end{De}

Denote the category of crossed semi-bimodules by $\Xbsmod$. We will need also a
weaker version.

\begin{De}
  A weak morphism from a crossed semi-bimodule $(K,A,\circ)$ to a crossed
  semi-bimodule $(K',A',\circ)$ is given by a monoid homomorphism $\kappa:A\to
  A'$ and by the map $\gamma:A\times K\to K'$, such that the following
  conditions hold
  \begin{enumerate}
    \item $\gamma(a,xy)=\gamma(a,x)\gamma(a,y)$,
    \item $\kappa(a)\circ\gamma(a,x)=\kappa(a\circ x)$,
    \item $^{\kappa(a)}\gamma(b,y)\gamma(a,x)^{\kappa(a)\circ\gamma(b,y)}
      = \gamma(ab,\, ^ayx^{b\circ y})$.
	\end{enumerate}
\end{De}

Any morphism $(\kappa,\alpha)$ gives rise to a weak morphism $(\kappa,\gamma)$,
where $\gamma(a,x)=\alpha(x)$. Weak morphisms can be composed as follows
\[(\kappa',\gamma')\circ(\kappa,\gamma)=(\kappa'\circ\kappa,\gamma''),\]
where $\gamma''(a,x)=\gamma'(\kappa(a),\gamma(a,x))$. Thus, we obtain the
category $\Xbsmod_w$. Isomorphisms in $\Xbsmod_w$ are called weak isomorphisms.

Recall that a \emph{semi-bimodule} over $A$ is a commutative monoid $K$,
together with left and right monoid actions of $A$ on $K$, satisfying Equality
i) of Definition \ref{mu}.

\begin{Le}\label{07052023}
  The category of semi-bimodule over a monoid $A$ is equivalent to the full
  subcategory of crossed semi-bimodules over $A$, for which
  \[a\circ x=a\]
  holds for all $a\in A$ and all $x\in K$.
\end{Le}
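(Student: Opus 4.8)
The plan is to exhibit a pair of mutually inverse functors, so that the asserted equivalence is in fact an isomorphism of categories. In one direction, given a semi-bimodule $K$ over $A$ with left action $^ax$ and right action $x^a$, let $F(K)$ be the monoid $K$ equipped with $A$, these two actions as $\lambda$ and $\rho$, and the map $\circ$ defined by $a\circ x=a$. One first checks that $\circ$ makes $A$ into a right $K$-set: the unit law $a\circ 1=a$ and the law $(a\circ x)\circ y=a\circ(xy)$ both reduce to $a=a$. The statements that $\lambda$ makes $K$ a left $A$-monoid and $\rho$ makes $K$ a right $A$-monoid are part of the data of a semi-bimodule. It then remains to verify conditions (1)--(4) of Definition~\ref{mu}. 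Condition (1) is literally Equality~i) of Definition~\ref{mu}, which holds by hypothesis on $K$; conditions (2) and (3) collapse to $ab=ab$ after substituting $a\circ x=a$; and condition (4) becomes $(^ay)x^b=x^b(^ay)$, which holds because $K$ is commutative.

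For morphisms, a homomorphism of semi-bimodules $\kappa\colon K\to K'$ over $A$ is a monoid homomorphism with $\kappa(^ax)=\,^a\kappa(x)$ and $\kappa(x^a)=\kappa(x)^a$. Taking $\alpha=\id_A$, these are precisely conditions (2) and (3) in the definition of a morphism of crossed semi-bimodules, while condition (1) reads $a\circ x=a\circ\kappa(x)$ and holds since both sides equal $a$. Hence $F(\kappa):=(\kappa,\id_A)$ lands in the full subcategory described in the statement, and $F$ is clearly functorial.

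In the other direction, let $(K,A,\circ,\lambda,\rho)$ be a crossed semi-bimodule over $A$ with $a\circ x=a$ for all $a$ and $x$. Then $K$ is commutative: substituting $a=b=1_A$ into condition (4) and using $^{1}y=y$ and $x^1=x$ yields $yx=xy$. Condition (1) then says exactly that the left and right $A$-actions on the commutative monoid $K$ satisfy the compatibility required of a semi-bimodule, so $G(K,A,\circ,\lambda,\rho):=K$, together with these two actions, is a semi-bimodule over $A$; and on a morphism $(\kappa,\id_A)$ we put $G(\kappa,\id_A):=\kappa$, which is a semi-bimodule morphism by conditions (2) and (3). Since $F$ and $G$ visibly invert one another --- $GF(K)$ is $K$ with the same actions, while for a crossed semi-bimodule in the subcategory the value $a\circ x$ is forced to be $a$, so $FG$ is the identity as well --- the two categories are isomorphic, hence equivalent.

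The only non-bookkeeping step is the derivation of the commutativity of $K$ from condition (4); every other verification is a routine substitution of $a\circ x=a$ into the axioms of Definition~\ref{mu}. It is worth noting that this is exactly why condition (4) is the correct two-sided replacement for Patchkoria's axiom~ii): in the degenerate case $a\circ x=a$ it encodes precisely the commutativity that is built into the notion of a semi-bimodule.
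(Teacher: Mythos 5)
Your proposal is correct, and it is exactly the ``straightforward verification'' that the paper omits: define $a\circ x=a$, check axioms (1)--(4) of Definition~\ref{mu} (with (4) reducing to commutativity of $K$), and conversely recover commutativity of $K$ from (4) at $a=b=1$, giving mutually inverse functors. Nothing further is needed.
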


The proof is just a straightforward verification.

\begin{Le}\label{1515042023}
  Let $(K,A,\circ)$ be a crossed semi-bimodule. Define
  \[\partial:K\to A\]
  by $\partial (y)=1\circ y$. For any $x,y\in K$, one has
  \[yx^{\partial(y)}=x( ^{\partial(x)}y).\]
\end{Le}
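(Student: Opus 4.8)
The plan is to derive the identity by specializing condition~(4) of Definition~\ref{mu}, namely
\[(^ay)\,x^{b\circ y}=x^b\,(^{a\circ x}y),\]
to suitable values of $a$ and $b$. The target of the lemma, $yx^{\partial(y)}=x(^{\partial(x)}y)$ with $\partial(y)=1\circ y$, looks like the special case one obtains by taking $a=b=1$: then the left-hand side becomes $(^1y)\,x^{1\circ y}=(^1y)\,x^{\partial(y)}$ and the right-hand side becomes $x^1\,(^{1\circ x}y)=x^1\,(^{\partial(x)}y)$. So the only gap between this and the claimed identity is that $^1y$ should equal $y$ and $x^1$ should equal $x$.

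The first step, then, is to record that $^1y=y$ and $x^1=x$ for all $x,y\in K$: the former because $\lambda$ makes $K$ into a left $A$-monoid, hence in particular a left $A$-\emph{set}, so the identity $1\in A$ acts trivially; the latter because $\rho$ makes $K$ into a right $A$-monoid, so again $1\in A$ acts trivially. The second step is to substitute $a=1$, $b=1$ into condition~(4), yielding $(^1y)\,x^{(1\circ y)}=x^1\,(^{(1\circ x)}y)$. The third step is to rewrite $1\circ y=\partial(y)$, $1\circ x=\partial(x)$ using the definition of $\partial$, and to apply the two trivial-action facts to replace $^1y$ by $y$ and $x^1$ by $x$, giving exactly $yx^{\partial(y)}=x(^{\partial(x)}y)$.

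I do not anticipate a genuine obstacle here; the statement is designed to be an immediate corollary of axiom~(4), and the only thing to be careful about is that "left (resp.\ right) $A$-monoid" in Definition~\ref{mu} entails the underlying unital action property $1\cdot(-)=(-)$, which is part of what it means to be a monoid action. If one wanted to be maximally explicit, one could also note that no use is made of the multiplicativity of the actions, only of their unitality, so the identity already holds at the level of $A$-sets. The write-up will therefore consist of the one-line substitution together with the remark that $1$ acts as the identity on both sides.
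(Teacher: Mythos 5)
Your proposal is correct and follows exactly the paper's argument: substitute $a=b=1$ into Identity (4) of Definition \ref{mu} and identify $1\circ y$ with $\partial(y)$, using that the unit of $A$ acts trivially on $K$. The paper's proof is the same one-line specialization, merely leaving the unitality of the actions implicit.
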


\begin{proof}
  Substitute $a=1=b$ in Identity iv) of the definition of a crossed
  semi-bimodule \ref{mu} to obtain $yx^{1\circ y}=\, x(^{1\circ x}y)$, and the
  result follows.
\end{proof}

\subsection{Crossed semi-bimodules and internal categories in $\bMon$}

\begin{Pro}\label{21020402023}
  Let $(K,A,\circ)$ be a crossed semi-bimodule.
  \begin{itemize}
    \item[i)] Define the multiplication $\bu$ on $A\times K$ by
      \[(a,x)\bu(b,y)=(ab,\,^ayx^{b\circ y}).\]
      This defines a monoid structure on the set $A\times K$. This monoid is
      denoted by $A\bowtie K$. 
    \item[ii)] Consider the set $A\times K\times K$ with the multiplication
			\[(a,x,y)\bu(b,u,v)
      =(ab,\,^aux^{b\circ u},\,^{a\circ u}vy^{b\circ u\circ v}).\]
      This induces a monoid structure denoted by $A\bowtie K\bowtie K$.
    \item[iii)] Define the maps
      \[\xymatrix{A\bowtie K\bowtie K\ar@/^/[r]^{d_0^2}\ar@/_/[r]_{d_0^2}\ar[r]
      & A\bowtie K\ar@/^/[r]^{d_0^1}\ar@/_/[r]_{d_0^1} & A; & A \ar[r]^{s^0_0}
      & A\bowtie K\ar@/^/[r]^{s_0^1}\ar@/_/[r]_{s_1^1} & A\bowtie K\bowtie K}\]
			by
      \[s^0_0(a)=(a,1),\quad s^1_0(a,x)=(a,x,1),\quad s^1_1(a,x)=(a,1,x).\]
      \[d^1_0(a,x)=a,\quad d^1_1(a,x)=a\circ x,\]
      \[d^2_0(a,x,y)=(a,x),\quad d^2_1(a,x,y)=(a,xy),\quad d^2_2(a,x,y)=(a\circ
      x, y).\]
			These maps satisfy all the simplicial identities and are homomorphisms.
    \item[iv)] The diagram 
			\[\xymatrix{A\bowtie K\bowtie K\ar[r]^{d^2_2}\ar[d]_{d^2_0} &
      A\bowtie K\ar[d]^{d^1_0} \\ A\bowtie K \ar[r]_{d^1_1} & A}\]
			is a pullback diagram.
    \item[v)] The above defines an internal category, denoted
      $\Cat(A,K,\circ)$, in the category $\bMon$ of monoids.
  \end{itemize}
\end{Pro}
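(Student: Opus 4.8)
The plan is to verify each of the five parts in turn, using the four defining identities of Definition \ref{mu} as the only input, and leveraging Lemma \ref{1515042023} where the map $\partial(y)=1\circ y$ appears implicitly.

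For part i), I would first check that $(1,1)$ is a two-sided unit for $\bu$: this uses that $A$ is a right $K$-set with $a\circ 1 = a$ (from the $K$-set axiom), that $^1x = x$ and $x^1 = x$ (from the left/right $A$-monoid axioms), and that $1\circ y = \partial(y)$ need not be trivial but $^1y\cdot 1^{\,1\circ y} = y$ still holds. The substantive point is associativity. Expanding $\bigl((a,x)\bu(b,y)\bigr)\bu(c,z)$ and $(a,x)\bu\bigl((b,y)\bu(c,z)\bigr)$, the $A$-coordinates agree trivially. In the $K$-coordinate one obtains two expressions that must be reconciled by repeated use of identities (1) (compatibility of the two actions), (2) and (3) (the mixed associativity laws for $\circ$), and crucially (4) (the ``braiding'' relation $(^ay)x^{b\circ y} = x^b(^{a\circ x}y)$), which is exactly what is needed to commute an $x$-term past a $y$-term. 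I expect this to be the main obstacle: it is the one genuinely nontrivial computation, and one must be careful that the superscripts $b\circ y$, $b\circ u\circ v$, etc. are manipulated consistently. Part ii) is handled the same way, with one more coordinate; associativity there reduces to two applications of the associativity already established in i) (once on each pair of $K$-coordinates), together with identity (1) to move the outer $A$-action through.

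For part iii), I would simply check the simplicial identities $d^1_0 d^2_0 = d^1_0 d^2_1$, $d^1_0 d^2_2 = d^1_1 d^2_1$, $d^1_1 d^2_0 = d^1_1 d^2_2$, $d^1_0 s^0_0 = \id = d^1_1 s^0_0$, and the four identities relating $s^1_0, s^1_1$ to the $d^2_i$; each is a direct substitution, the only mildly interesting one being $d^1_1 d^2_2(a,x,y) = (a\circ x)\circ y = a\circ(xy)= d^1_1 d^2_1(a,x,y)$, which is the $K$-set axiom. Then I would verify that each of $d^1_0, d^1_1, s^0_0, d^2_0, d^2_1, d^2_2$ is a monoid homomorphism: $d^1_0$ and $d^2_0, d^2_1$ are projections (or close to it) and are immediate; $d^1_1$ being a homomorphism, i.e. $(ab)\circ(^ayx^{b\circ y}) = (a\circ x)(b\circ y)$, follows by combining identities (2) and (3); and $d^2_2$ being a homomorphism is the identity $(ab)\circ(^aux^{b\circ u}) = (a\circ x)(b\circ u)$ in the first coordinate again, with the second coordinate $^{a\circ u}vy^{b\circ u\circ v}$ matching by definition.

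For part iv), I must show $A\bowtie K\bowtie K$ with the maps $d^2_0$ and $d^2_2$ is the pullback of $d^1_1$ along $d^1_0$. At the level of sets the pullback is $\{((a,x),(a',y)) : a\circ x = a'\}$, and the assignment $(a,x,y)\mapsto \bigl((a,x),(a\circ x, y)\bigr)$ is a bijection onto it with inverse $((a,x),(a\circ x,y))\mapsto (a,x,y)$; one then checks this bijection is a monoid isomorphism by comparing the componentwise product on the pullback with $\bu$ on $A\bowtie K\bowtie K$, which is precisely how the three-fold product was defined, so it matches on the nose. Since $\bMon$ has all finite limits and a monoid map that is a bijection is an isomorphism, this suffices. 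Finally part v) is a formality: parts i)--iv) supply exactly the data (the objects $C_0 = A$, $C_1 = A\bowtie K$, $C_2 = A\bowtie K\bowtie K$, the four structural maps) and the axioms (simplicial identities, pullback square, homomorphism property) demanded by the definition of an internal category in a finitely complete category, so $\Cat(A,K,\circ)$ is an internal category in $\bMon$.
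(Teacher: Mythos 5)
Your overall route is the same as the paper's: a direct associativity check for i) resting on identities (2), (3) and (4) of Definition \ref{mu}, the simplicial identities plus homomorphism checks for iii), the identification of $A\bowtie K\bowtie K$ with the pullback for ii) and iv), and a formal assembly in v). However, there is a genuine gap in your part iii): you lump $d^2_1$ together with the projections and call it ``immediate''. It is not. The map $d^2_1(a,x,y)=(a,xy)$ encodes the composition of the internal category, and its multiplicativity is the interchange law; in the second coordinate one has to verify an identity of the shape
\[{}^au\,x^{b\circ u}\cdot{}^{a\circ x}v\cdot y^{b\circ u\circ v}\;=\;{}^au\,{}^av\,x^{b\circ (uv)}\,y^{b\circ (uv)},\]
which requires commuting $x^{b\circ u}$ past ${}^{a\circ x}v$ via identity (4) (applied with $b$ replaced by $b\circ u$), together with $b\circ(uv)=(b\circ u)\circ v$. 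This is exactly the paper's ``Second claim'' and, alongside associativity of $\bu$, it is one of the two substantive computations of the whole proposition; without it, the composite of $\Cat(A,K,\circ)$ is not known to be a monoid homomorphism and part v) does not follow.

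Two smaller points. For $d^1_1$ you cite only (2) and (3), but the needed identity $(a\circ x)(b\circ y)=(ab)\circ({}^ay\,x^{b\circ y})$ also uses (4); since this is literally the identity you must derive anyway to finish the associativity check in i), this is a mis-attribution rather than a gap, but you should say so explicitly. Likewise, the second coordinate in the $d^2_2$ check does not match ``by definition'': comparing $d^2_2\bigl((a,x,y)\bu(b,u,v)\bigr)$ with $(a\circ x,y)\bu(b\circ u,v)$ forces the third coordinate of the triple product to be ${}^{a\circ x}v\,y^{b\circ u\circ v}$ (as your own pullback identification in iv) produces), so this coordinate should be checked against the stated formula rather than waved through; your iv), done carefully, is in fact the cleanest way to settle both ii) and this point at once, which is essentially how the paper argues ii).
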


\begin{proof}
  i) It is clear that $(1,1)$ serves as the unit of $\bu$. For associativity,
  we have
  \begin{eqnarray*}
    \left((a,x)(b,y)\right)(c,z)&=&(ab,\,^ayx^{b\circ y})(c,z)\\
                                &=&(abc,\,^{ab}z(\,^ayx^{b\circ y})^{c\circ z})\\
                                &=&(abc,\,^{ab}z(^ay)^{c\circ z}x^{(b\circ y)
                                   (c\circ z)}).
  \end{eqnarray*}
	On the other hand
  \begin{eqnarray*}
    (a,x)\left((b,y)(c,z)\right)&=&(a,x)(bc,\,^bzy^{c\circ z})\\
                                &=&(abc,\,^{ab}z(^ay)^{c\circ z}x^{(bc)\circ
                                   (^bzy^{c\circ z})}.
  \end{eqnarray*}
  Thus, we need to check the identity
  \begin{equation}\label{igive02042023}
    (b\circ y)(c\circ z)=(bc)\circ(^bzy^{c\circ z}).
  \end{equation}
  Equalities (2) and (3) of Definition \ref{mu} $a=b\circ y, b=c,x=z$ yield
  \begin{eqnarray*}
    (b\circ y)(c\circ z)&=&\left ((b\circ y)c\right)\circ(^{b\circ y}z)\\
                        &=&(bc)\circ y^c \circ (^{b\circ y}z).
  \end{eqnarray*}
  Equality (4) of the same Definition now gives Identity (\ref{igive02042023}).

  ii) This follow from parts iii) and iv).

  iii) We will only check that the maps $d^1_1,d^2_1, d^2_2$ are homomorphisms.
  The rest is clear.

  \noindent
  {\bf First claim}: $d^1_1$ is a homomorphism. Thanks to Identity
  (\ref{igive02042023}), we have
  \begin{eqnarray*}
    d_1^1(a,x)d^1_1(b,y)&=&(a\circ x)(b\circ y)\\
                        &=&(ab)\circ (^ayx^{b\circ y})\\
                        &=&d_1^1\left((a,x)\bu(b,y)\right).
  \end{eqnarray*}

  \noindent
  {\bf Second claim}: $d^2_1$ is a homomorphism. We have
  \begin{eqnarray*}
    d^2_1\left((a,x,y)\bu(b,u,v)\right)&=&d^2_1(ab,\,^aux^{b\circ u},\,
    ^{a\circ u}vy^{b\circ u\circ v}) \\
                                       &=&(ab, \, ^aux^{b\circ u}\,
                                       ^{a\circ u}vy^{b\circ u\circ v}).
  \end{eqnarray*}
  On the other hand
  \begin{eqnarray*}
    d^2_1(a,x,y)\bu d^2_1(b,u,v)&=&(a,xy)\bu(b,uv)\\
                                &=&(ab,\, ^a(uv)(xy)^{b\circ uv})\\
                                &=&(ab,\, ^au\, ^av \, x^{b\circ uv} y^{b\circ
                                uv}).
	\end{eqnarray*}
  Since $\circ$ is an action, we have $b\circ(uv)=b\circ u\circ v$, and based
  on Equality (4) of the definition of a crossed semi-bimodule \ref{mu}, we
  conclude that these expressions are equal.

  \noindent
  {\bf Third claim}: $d^2_2$ is a homomorphism. As for the first claim, we have
  \begin{eqnarray*}
    d^2_2\left((a,x,y)\bu(b,u,v)\right)&=&d^2_2(ab,\,^aux^{b\circ u},\,
    ^{a\circ u}vy^{b\circ u\circ v})\\
                                       &=&((a\circ x)(b\circ u,^{a\circ u}
                                       vy^{b\circ u\circ v})\\
                                       &=&(a\circ x,y)\bu(b\circ u,v)\\
                                       &=&d^2_2(a,x,y)\bu d^2_2(b,u,v).
	\end{eqnarray*}

  iv) Take two elements $(a,x)$ and $(b,y)$ in $A\bowtie K$ such that
  $d^1_1(a,x)=d^1_0(b,y)$. Equivalently $a\circ x=b$. Thus, there exits a
  unique element in $A\bowtie K\bowtie K$, namely $(a,x,y)$, such that
  $d^2_0(a,x,y)=(a,x)$ and $d^2_2(a,x,y)=(b,y)$. The result follows.

  v) This is a consequence of iii) and iv).
  \end{proof}

It is clear that if $(\kappa,\alpha)$ is a morphism of crossed semi-bimodules,
it defines an internal functor $\Cat(K,A,\circ)\to\Cat(K',A',\circ)$. In this
way, we obtain a functor $\Xbsmod\to\Cat(\bMon)$, from the category of crossed
semi-bimodules to the category of internal categories in the category of
monoids.

It is less obvious, but readily checked, that any weak morphism $(\kappa,
\gamma)$ from $(K,A,\circ)\to(K',A',\circ)$ also defines an internal functor
$\Cat(K,A,\circ)\to\Cat(K',A',\circ)$. On object objects, it is given by
$\kappa$. On morphism objects, it is defined by $(a,x)\mapsto(\kappa(a),
\gamma(a,x))$. On the object of composable morphisms, the internal functor is
given by $(a,x,y)\mapsto(\kappa(a),\gamma(a,x),\gamma(a,y))$. This gives us a
functor
\[\Xbsmod_w\to\Cat(\bMon).\]
	 
\begin{Le}\label{221504}
  Let $(K,A,\circ)$ be a crossed semi-bimodule. Define a new operation
  $\diamond$ on $K$ by
  \[x\diamond y:=yx^{\partial(y)}.\]
  Let $K^{tw}$ be the set $K$, equipped with the operation $\diamond$. Then
  $K^{tw}$ is a monoid.
\end{Le}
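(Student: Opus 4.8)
The plan is to verify directly that $\diamond$ defined by $x\diamond y = y\,x^{\partial(y)}$ is associative and that $1\in K$ is a two-sided unit. The unit is immediate: $x\diamond 1 = 1\cdot x^{\partial(1)} = x^{1} = x$ since $\partial(1) = 1\circ 1 = 1$ (as $\circ$ is an action with $A$-unit $1$, and $K$-unit $1$), and $1\diamond x = x\cdot 1^{\partial(x)} = x$ since $1^a = 1$ for all $a$ because $\rho$ makes $K$ a right $A$-monoid (so the action is by monoid endomorphisms, fixing the unit). So the real content is associativity.

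First I would expand both sides of the desired identity $(x\diamond y)\diamond z = x\diamond(y\diamond z)$. On the left, $x\diamond y = y\,x^{\partial(y)}$, so
\[
(x\diamond y)\diamond z = z\,(y\,x^{\partial(y)})^{\partial(z)} = z\,y^{\partial(z)}\,x^{\partial(y)\partial(z)},
\]
using that $x\mapsto x^a$ is a monoid homomorphism for each fixed $a$, and that $\rho$ is an action so $x^{\partial(y)\partial(z)} = (x^{\partial(y)})^{\partial(z)}$. On the right, $y\diamond z = z\,y^{\partial(z)}$, so
\[
x\diamond(y\diamond z) = x\diamond(z\,y^{\partial(z)}) = (z\,y^{\partial(z)})\,x^{\partial(z\,y^{\partial(z)})} = z\,y^{\partial(z)}\,x^{\partial(z)\,\partial(y^{\partial(z)})},
\]
where I have again used that $\partial$ is a monoid homomorphism (being $1\circ(-)$ composed with multiplicativity — this needs checking, see below). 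Comparing, associativity reduces to the identity
\[
x^{\partial(y)\partial(z)} = x^{\partial(z)\,\partial(y^{\partial(z)})}
\]
for all $x,y,z\in K$; since $\rho$ is an action it suffices to have $\partial(y)\partial(z) = \partial(z)\,\partial(y^{\partial(z)})$ in $A$. But this is exactly condition i) of Definition \ref{mu} applied with $a = \partial(z)$, $x = y$: namely $a\,\partial(y^{a}) = \partial(y)\,a$. Wait — I must check the variance matches: Definition \ref{mu} i) reads $a\,\partial(x^a) = \partial(x)\,a$, but here $\partial$ was defined as $\partial(y) = 1\circ y$, not as the structure map in i). I should therefore first establish that $\partial(y) = 1\circ y$ satisfies $a\,\partial(y^a) = \partial(y)\,a$; this follows from condition (3) of Definition \ref{mu} with the substitution $a\mapsto 1$, $b\mapsto a$, namely $a\circ(y^a) = (1\circ y)a$, i.e. $\partial(y^a)\cdot(\text{something})$ — more precisely $(1\cdot a)\circ(y^a) = (1\circ y)a$ gives $a\circ y^a = \partial(y)a$, and then I still need the missing left factor; here condition (2) with $a\mapsto 1$ similarly handles $^ay$. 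I would also need $\partial$ multiplicative: $\partial(uv) = 1\circ(uv) = (1\circ u)\circ v$, and Lemma \ref{1515042023} or a direct manipulation via (3) should give $(1\circ u)\circ v = (1\circ u)(1\circ v)$ — actually this requires care and is the place the argument is most delicate.

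The main obstacle, then, is bookkeeping: getting the variances and substitutions in conditions (1)–(4) of Definition \ref{mu} lined up so that (a) $\partial$ is a monoid homomorphism $K\to A$, (b) $a\,\partial(y^a) = \partial(y)\,a$ holds, and (c) the residual identity $\partial(y)\partial(z) = \partial(z)\,\partial(y^{\partial(z)})$ drops out. Once those three facts are in hand, associativity of $\diamond$ is a one-line comparison of the two expanded forms above, and the lemma follows. I would present the proof by first proving (a) and (b) as a short preliminary computation, then doing the expansion of both sides of associativity and invoking (b).
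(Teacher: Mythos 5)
Your overall plan (check the unit directly, expand both sides of associativity) is sound, and your unit verification is correct, but the two auxiliary facts you propose to prove in the "preliminary computation" are false for a general crossed semi-bimodule, so the route you sketch cannot be completed. Your (b), $a\,\partial(y^a)=\partial(y)\,a$, is precisely Patchkoria's crossed semi-module axiom i); what the axioms of Definition \ref{mu} actually give (condition (3) with $a=1$) is only $a\circ(y^{a})=\partial(y)\,a$, and $a\circ w$ is in general not equal to $a\,\partial(w)=a(1\circ w)$. Similarly, $\partial=1\circ(-)$ is in general not a monoid homomorphism: the right-$K$-action property only yields $\partial(uv)=(1\circ u)\circ v=\partial(u)\circ v$. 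Both failures are visible in the example of Section \ref{6quad}: there $\partial\begin{pmatrix}1&r\\0&s\end{pmatrix}=[s-pr,\,-qrs-r^{2}]$, and for $m=[a,b]$, $k^{m}=\begin{pmatrix}1&ra\\0&s\end{pmatrix}$ the first components of $m\,\partial(k^{m})$ and $\partial(k)\,m$ are $as-pra^{2}$ and $as-pra$, which differ; multiplicativity of $\partial$ fails likewise. Indeed, if (a) and (b) always held you would essentially be back in the crossed semi-module setting, which is exactly the special subclass the paper is generalizing away from.

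The repair is small and stays inside your computation: the only value of $\partial$ on a product that you need is $\partial\bigl(z\,y^{\partial(z)}\bigr)$, and this can be computed without multiplicativity. Since $\circ$ is a right $K$-action, $\partial\bigl(z\,y^{\partial(z)}\bigr)=(1\circ z)\circ\bigl(y^{\partial(z)}\bigr)=\partial(z)\circ\bigl(y^{\partial(z)}\bigr)$, and condition (3) of Definition \ref{mu} with $a=1$, $b=\partial(z)$ gives $\partial(z)\circ\bigl(y^{\partial(z)}\bigr)=\partial(y)\,\partial(z)$. Substituting this, both of your expanded expressions become $z\,y^{\partial(z)}\,x^{\partial(y)\partial(z)}$, and associativity follows. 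For comparison, the paper's own proof avoids all of this: it observes that $(1,x)\bu(1,y)=(1,x\diamond y)$ in the monoid $A\bowtie K$ of Proposition \ref{21020402023}, so $K^{tw}$ is the fiber over $1\in A$, a submonoid, and inherits associativity and the unit at once; your direct verification, once corrected as above, amounts to redoing the $a=b=1$ case of that associativity computation.
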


\begin{proof}
  By comparing $\diamond$ to the multiplication on $A\bowtie K$, we see that
  \[(1,x)\bu(1,y)=(1,x\diamond y).\]
  It follows that $\diamond$ is associative and $1\diamond x=x=x\diamond 1$. 
\end{proof}

\section{Relationship between crossed semi-modules and crossed semi-bimodules}

The aim of this section is to show that crossed semi-modules are exactly the
crossed semi-bimodules for which $^ax=x$, for every $a\in A$ and every $x\in K$.

\begin{Pro}
  \begin{itemize}
    \item[i)] Let $\partial:K\to A$ be a crossed semi-module. For elements $a\in
      M$ and $x\in G$, define
      \[^ax=x,\quad a\circ x=a\partial(x).\]
      This yields obtains a crossed semi-bimodule structure, denoted by
      $\Phi(K,A,\partial)$. 
    \item[ii)] Conversely, assume we have a crossed semi-bimodule for which 
      $^ax=a$, for all $a\in A$ and $x\in K$. Define $\partial:K\to A$ by
      \[\partial (x)=1\circ x.\]
      We obtain a crossed semi-module.
    \item[iii)] The constriction in i) defines a full and faithful functor
      $\Phi:\Xsmod\to\Xbsmod$ from the category of crossed semi-modules to the
      category of crossed semi-bimodules. Moreover, it induces an isomorphism 
      of $\Xsmod$ to the full subcategory of crossed semi-bimodules for which
      $^ax=x$, for all $a\in A$ and $x\in K$.
  \end{itemize}
\end{Pro}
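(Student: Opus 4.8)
The plan is to verify the three parts directly from the defining equalities, treating i) as the substantive check and ii), iii) as bookkeeping. For part i), given a crossed semi-module $\partial\colon K\to A$ (so $a\partial(x^a)=\partial(x)a$ and $yx^{\partial y}=xy$), I set $^ax=x$ and $a\circ x=a\partial(x)$. First I would confirm the ``set/monoid-action'' clauses of Definition \ref{mu}: that $a\circ x$ makes $A$ a right $K$-set amounts to $(a\circ x)\circ y = a\circ(xy)$, i.e.\ $a\partial(x)\partial(y)=a\partial(xy)$, which holds since $\partial$ is a monoid homomorphism; that $\lambda$ is trivial makes $K$ a left $A$-monoid vacuously; and $\rho$ is already assumed to be a right action of $A$ on the monoid $K$. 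Then I check the four numbered identities. Identity (1), $(^ax)^b={}^a(x^b)$, reads $x^b=x^b$. Identity (2), $(ab)\circ(^ax)=a(b\circ x)$, reads $ab\partial(x)=a\,b\partial(x)$. Identity (3), $(ab)\circ(x^b)=(a\circ x)b$, reads $ab\partial(x^b)=a\partial(x)b$, which is exactly clause i) of the crossed semi-module definition multiplied on the left by $a$. Identity (4), $(^ay)x^{b\circ y}=x^b\,(^{a\circ x}y)$, reduces with $\lambda$ trivial to $y\,x^{b\partial(y)}=x^b\,y$; since $\rho$ is an action this is $y\,(x^b)^{\partial(y)}=x^b\,y$, which is clause ii) of the crossed semi-module definition applied to $x^b$ in place of $x$. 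So all four hold.

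For part ii), starting from a crossed semi-bimodule with $^ax=x$ (I read the paper's ``$^ax=a$'' as a typo for $^ax=x$, consistent with the section's stated aim), define $\partial(x)=1\circ x$. I would first show $\partial$ is a monoid homomorphism: $\partial(xy)=1\circ(xy)=(1\circ x)\circ y$; to identify this with $\partial(x)\partial(y)$ I use Identity (3) with $a=1$, $b=\partial(x)=1\circ x$, which gives $(1\circ x)\circ(y)$-type rearrangements, or more directly Identity (2)/(3) together with $^ax=x$ to get $a\circ x = a(1\circ x)=a\,\partial(x)$ for all $a$ — indeed Identity (2) with the trivial $\lambda$ gives $(ab)\circ x=a(b\circ x)$, and setting $b=1$ yields $a\circ x=a(1\circ x)=a\partial(x)$. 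Once $a\circ x=a\partial(x)$ is established, $\partial(xy)=1\circ(xy)=(1\circ x)\circ y=\partial(x)\circ y=\partial(x)\partial(y)$, so $\partial$ is a homomorphism. Clause i) of the crossed semi-module definition, $a\partial(x^a)=\partial(x)a$, then follows from Identity (3) written as $(ab)\circ(x^b)=(a\circ x)b$ with $a=1$: $b\circ x^b=(1\circ x)b$, i.e.\ $b\,\partial(x^b)=\partial(x)b$. Clause ii), $yx^{\partial y}=xy$, is precisely Lemma \ref{1515042023} specialized using $^{\partial(x)}y=y$ (trivial $\lambda$), giving $yx^{\partial y}=x\,(^{\partial x}y)=xy$.

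For part iii), functoriality of $\Phi$ is immediate: a morphism of crossed semi-modules is a pair $(\kappa,\alpha)$ commuting with $\partial$ and the $A$-action, and one checks the three conditions in the definition of a morphism of crossed semi-bimodules — condition (1) becomes $\alpha(a\partial(x))=\alpha(a)\partial(\kappa(x))$, which is $\alpha$ being a homomorphism together with $\alpha\partial=\partial'\kappa$; condition (2) is $\kappa(x)=\kappa(x)$; condition (3) is equivariance of $\kappa$. Fullness and faithfulness: faithfulness is clear since $\Phi$ does not alter the underlying pair of monoids or the maps $\kappa,\alpha$. For fullness, given a crossed-semi-bimodule morphism between images $\Phi(K,A,\partial)\to\Phi(K',A',\partial')$, condition (2) forces nothing extra (both $\lambda$'s trivial), condition (3) gives equivariance, and condition (1) with $a=1$ gives $\alpha\partial=\partial'\kappa$; hence $(\kappa,\alpha)$ is already a morphism of crossed semi-modules, so $\Phi$ is full. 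Finally, combining i) and ii): $\Phi$ lands in the full subcategory $\mathcal S$ of crossed semi-bimodules with $^ax=x$, and ii) produces an inverse assignment on objects; one checks the two constructions are mutually inverse — starting from a crossed semi-module, $\Phi$ then ii) returns $\partial(x)=1\circ x=1\cdot\partial(x)=\partial(x)$ and the same action; starting from an object of $\mathcal S$, ii) then $\Phi$ returns $a\circ x=a\partial(x)=a(1\circ x)$, which equals the original $a\circ x$ by the identity $a\circ x=a(1\circ x)$ derived above. Since both functors are the identity on morphisms (underlying pairs $(\kappa,\alpha)$), this is an isomorphism of categories $\Xsmod\cong\mathcal S$.

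I expect no serious obstacle: the only subtle point is the identity $a\circ x=a(1\circ x)$ for crossed semi-bimodules with trivial $\lambda$, which is the linchpin for both the homomorphism property of $\partial$ and for showing the two constructions are mutually inverse; everything else is substitution into the four axioms of Definition \ref{mu} and the two axioms of a crossed semi-module. The main care needed is keeping track of the (apparent) typo ``$^ax=a$'' versus ``$^ax=x$'' and being consistent with the convention that $\lambda$ trivial means $^ax=x$, i.e.\ the action is by the identity of $K$, not that $^ax$ is the unit $1\in K$.
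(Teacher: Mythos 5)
Your proof is correct and follows essentially the same route as the paper: part i) is the same direct verification of the four identities of Definition \ref{mu} (with (3) coming from clause i) of the crossed semi-module axioms and (4) from clause ii) applied to $x^b$), part ii) hinges on the same key step of setting $b=1$ in Identity (2) to get $a\circ x=a\partial(x)$ before checking the homomorphism property and the two axioms, and part iii) is the same (if more detailed) bookkeeping. You also correctly read ``$^ax=a$'' as a typo for $^ax=x$, which matches the paper's intent.
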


\begin{proof}
  i) Equality (1) of Definition \ref{mu} is obvious. We have
  \[(ab)\circ(^a x)=(ab)\circ(x)=ab\partial(x)=a(b\circ x),\]
  which proves Equality (2) of Definition \ref{mu}. We also have
  \[(ab)\circ(x^b)=ab\partial(x^b)=a\partial(x)b=(a\circ x)b,\]
  which gives us Equality (3). Finally, we have 
  \[(^a y)x^{b\circ y}=yx^{b\partial(y)}=x^by=x^b(^{a\circ x}y),\]
	and part i) follows.
	
  ii) Assume $^ax=x$. By putting $b=1$ in Identity (2) of the definition of a
  crossed semi-bimodule Definition \ref{mu}, we obtain
  \[a\circ x=a\partial (x).\]
  By the definition of $\partial$ and setting $a=\partial(x)$, we get
	\[\partial(xy)=1\circ(xy)=(1\circ x)\circ y=\partial(x)\circ
  y=\partial(x)\partial (y).\]
  Hence, $\partial $ is a monoid homomorphism. It remains to show that the
  defining relations of crossed semi-modules are satisfied. To this end, we put
  $m=1$ in Identity (3) of Definition \ref{mu} to obtain
  \[b\partial(x^b) =b\circ x^b =(1\circ x)b=\partial(x)b.\]
  Quite similarly, taking $b=1$ in Identity (4) of Definition \ref{mu} gives us
  \[yx^{\partial(y)}=xy,\]
	and part ii) proved.
	
  iii) By our construction, in any crossed semi-bimodule coming from $\Xsmod$,
  we have $\partial(x)=1\circ x$. This implies that $\Phi$ is a full and
  faithful functor. The rest follows from part ii).
\end{proof}

\section{The case when $K$ and $A$ are groups}

Recall that if $(K,A,\circ)$ is a crossed semi-bimodule, we have a map
\[\partial:K\to A\]
defined by $\partial(x)=1\circ x$. 

In this section, we will investigate crossed semi-bimodules $(K,A,\circ)$, such
that $K$ and $A$ are groups. Our first observation is that the action $A\times
K\xto{\circ} A$ can be reconstructed from the map $\partial$.

\begin{Le}\label{4215042023}
  Let $(K,A,\circ)$ be a crossed semi-bimodule such that $K$ and $A$ are
  groups. For all $x,z\in K$ and $a\in A$, one has
  \begin{itemize} 
    \item [i)]$ a\circ x=a\partial(^{a\1}x)$,
    \item [ii)] $\partial(\,^{a\1}z^a)=a\1\partial(z)a$,
    \item [iii)] $\partial(xz^{\partial(x)})=\partial(z)\partial(x)$.
  \end{itemize} 
\end{Le}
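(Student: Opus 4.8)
The plan is to prove the three identities in order, using only the crossed semi-bimodule axioms of Definition \ref{mu}, the definition $\partial(y)=1\circ y$, and the group structure of $K$ and $A$. For part i), I would start from Equality (2) of Definition \ref{mu}, namely $(ab)\circ({}^ax)=a(b\circ x)$, and specialize the parameters so that the exponent on the left becomes the plain variable $x$. Concretely, replacing $x$ by ${}^{a\1}x$ and then using Equality (1) of Definition \ref{mu} (which gives $({}^{a\1}x)^{?}$ compatibility, but more to the point the associativity ${}^a({}^{a\1}x)={}^{a a\1}x={}^1 x$) together with the fact that ${}^1x=x$ — this last needs the unit axiom for the left action, which is part of ``$\lambda$ makes $K$ a left $A$-monoid''. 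So substituting $b=1$, $x\mapsto {}^{a\1}x$ in Equality (2) yields $a\circ({}^a({}^{a\1}x)) = a(1\circ {}^{a\1}x)$, i.e. $a\circ x = a\,\partial({}^{a\1}x)$, which is exactly i).

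For part ii), I would combine i) with Equality (3) of Definition \ref{mu}. Equality (3) reads $(ab)\circ(x^b)=(a\circ x)b$; setting $a=1$ gives $b\circ(x^b)=\partial(x)b$. Now apply i) to the left-hand side with the element $x^b$ in place of $x$: $b\circ(x^b)=b\,\partial({}^{b\1}(x^b))$. Using Equality (1) of Definition \ref{mu}, ${}^{b\1}(x^b)=({}^{b\1}x)^b$, so we get $b\,\partial(({}^{b\1}x)^b)=\partial(x)b$, hence $\partial(({}^{b\1}x)^b)=b\1\partial(x)b$. Relabelling $b\to a$ and $x\to z$ gives exactly ii). (The notation ${}^{a\1}z^a$ in the statement should be read as $({}^{a\1}z)^a$, which is unambiguous by Equality (1).)

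For part iii), I would use Lemma \ref{1515042023}, which states $y x^{\partial(y)} = x({}^{\partial(x)}y)$ for all $x,y\in K$. Applying $\partial$ to both sides and using that $\partial(xy)=\ ?$ — here I need multiplicativity of $\partial$, which in the group case I can extract: setting $x\mapsto {}^{\partial(x)}y$, the right side is $x\cdot({}^{\partial(x)}y)$, and I want $\partial$ of a product. Actually the cleanest route is: in part ii) we essentially learned $\partial$ is a crossed-module-type map; more directly, take Equality (3) with general $a,b$ and i) to show $\partial$ is multiplicative, or simply note that $\partial(xz^{\partial(x)})$ — by Lemma \ref{1515042023} with $y=x$, $x=z$ — equals $\partial(z\,x^{\partial(z)})$... let me instead argue: $xz^{\partial(x)} = z x^{\partial(z)}$ is not what Lemma \ref{1515042023} says; rather $z x^{\partial(z)} = x({}^{\partial(x)}z)$. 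So $\partial(xz^{\partial(x)})$: first establish $\partial(uv)=\partial(u)\partial(v)$ for all $u,v$ (from $\partial(uv)=1\circ(uv)=(1\circ u)\circ v=\partial(u)\circ v$ and then $\partial(u)\circ v = \partial(u)\,\partial({}^{\partial(u)\1}v)$ by i); this does not immediately simplify, so instead I would derive multiplicativity of $\partial$ directly by a different substitution as in part ii) of the previous Proposition's proof). Once $\partial$ is multiplicative, iii) follows: $\partial(xz^{\partial(x)})=\partial(x)\partial(z^{\partial(x)})$, and by ii)-type reasoning $\partial(z^{\partial(x)})=\partial(z)$ — no wait, that needs the right action to preserve $\partial$ up to conjugation: $\partial(z^a)=a\1\partial(z)a$ from setting $b\1\partial(x)b$ with the plain right action; more precisely from Equality (3) with $a=1$, $b=a$ applied to $z^{?}$...

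The cleanest plan, then: (a) first prove $\partial$ is a monoid homomorphism on the group $K$, by the substitution trick $\partial(xy)=1\circ(xy)=(1\circ x)\circ y$ and showing $(1\circ x)\circ y=\partial(x)\partial(y)$ via part i) plus Equality (1) — this is the step I expect to be the main obstacle, since unlike the crossed semi-module case the left action is nontrivial and one must carefully use ${}^{\partial(x)\1}(\cdot)$ to untwist; (b) then $\partial(z^{\partial(x)})=\partial(x)\1\partial(z)\partial(x)$ follows from ii) with $a=\partial(x)$ (noting $\partial(x)$ need not be $\partial$ of the inverse, but ii) as stated already has the $a\1$ on one side, so one sets $a\1=\partial(x)$, legitimate since $A$ is a group); (c) finally $\partial(xz^{\partial(x)}) = \partial(x)\,\partial(z^{\partial(x)}) = \partial(x)\cdot\partial(x)\1\partial(z)\partial(x) = \partial(z)\partial(x)$, which is iii). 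I expect step (a) — establishing multiplicativity of $\partial$ in the presence of the nontrivial $\lambda$-action — to be the real content; the rest is formal manipulation in the groups $A$ and $K$.
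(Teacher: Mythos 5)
Your parts i) and ii) are correct and essentially the paper's own argument (for ii) the paper rewrites Identity (3) of Definition \ref{mu} via i) and cancels, which is the same computation as your specialization $a=1$ plus Identity (1)). The problem is your final plan for part iii): both of its key steps fail. Step (a) asserts that $\partial$ is multiplicative on $K$, i.e.\ $\partial(xy)=\partial(x)\partial(y)$; this is \emph{not} a consequence of the axioms when the left action $\lambda$ is nontrivial. What the axioms give (and what you in fact derived before discarding it) is only the twisted identity $\partial(xy)=\partial(x)\,\partial(^{\partial(x)\1}y)$, coming from $\partial(xy)=1\circ(xy)=(1\circ x)\circ y$ and part i). Step (b) is likewise wrong: part ii) controls $\partial(^{a\1}z^{a})$, not $\partial(z^{a})$; dropping the left twist $^{a\1}(\cdot)$ is not justified. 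A concrete counterexample to both steps: take any nonabelian group $G$, put $K=A=G$, $^{a}x=axa\1$, $x^{a}=x$, and $a\circ x=x\1 a$. One checks directly that all four identities of Definition \ref{mu} hold, and $\partial(x)=1\circ x=x\1$. Then $\partial(xy)=y\1x\1\neq\partial(x)\partial(y)$ and $\partial(z^{\partial(x)})=z\1\neq\partial(x)\1\partial(z)\partial(x)$ in general, so your chain (a)--(c) breaks (even though the conclusion iii) itself is true there).

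The repair is exactly the identity you abandoned: having $\partial(xy)=\partial(x)\,\partial(^{\partial(x)\1}y)$, do not try to remove the twist; instead substitute $y=z^{\partial(x)}$. By compatibility (Identity (1) of Definition \ref{mu}) the twisted term becomes $^{\partial(x)\1}z^{\partial(x)}$, which is precisely the shape handled by part ii) with $a=\partial(x)$, giving $\partial(xz^{\partial(x)})=\partial(x)\,\partial(x)\1\partial(z)\partial(x)=\partial(z)\partial(x)$. This one substitution is the paper's proof of iii); you were one step away from it before pivoting to the (false) multiplicativity of $\partial$. Lemma \ref{1515042023} is not needed for iii) at all.
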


\begin{proof}
  i) Putting $x=\,^{a\1}t$ and $b=1$ in Equality ii) of Definition \ref{mu}, we
  obtain $a\circ t=a\partial(^{a\1}t) $.

  ii) Having in mind i), Identity 3 of Definition \ref{mu} can be rewritten as
	\[ab\partial(^{(ab)\1}x^b)=a\partial(^{a\1}x)b.\]
	If we cancel both sides by $a$ and replace $x$ by $^{a\1}y$, we obtain 
  \[b\partial(^{b\1}y^b)=\partial(y)b,\]
	and the result follows.
	
	iii) Since $\circ$ is an action of $K$ on $A$, we have
  \[a\circ (xy)=(a\circ x)\circ y.\]
	Take $a=1$ and use Identity i) to rewrite the last equality as
	\begin{equation}
    \partial(xy)=\partial(x)\partial(^{(\partial(x))\1}y).
  \end{equation}
	Take $y=z\,^{\partial(x)}$ and use part ii) to obtain
  \begin{eqnarray*}
  \partial(xz\,^{\partial(x)})&=&
  \partial(x)\partial(^{\partial(x)\1}z\,^{\partial(x)})\\
                              &=&\partial(x)(\partial(x))\1\partial(z)\partial(x)\\
                              &=&\partial(z)\partial(x).
  \end{eqnarray*}
\end{proof}

\begin{Pro}
  Let $K$ and $A$ be groups. Assume there are given maps
  \[A\times K\xto{\lambda}K;\quad(a,x)\mapsto\ ^ax,\]
  \[K\times A\xto{\rho}K;\quad(x,a)\mapsto x^a\]
  such that the first map makes $K$ into a left $A$-monoid, the second map makes
  $K$ into a right $A$-monoid and these actions are compatible. That is to say
  \[(^ax)^b=\ ^a(x^b),\quad a,b\in A,x\in K.\]
  Additionally, assume there is given a map $\partial:K\to A$, such that the
  following identities hold:
	\begin{itemize}
		\item[i)] $\partial(xy)=\partial(x)\partial(\,^{\partial(x)\1}y)$
		\item[ii)] $\partial(\,^{b\1}z^b)=b\1\partial(z)b$
		\item[iii)] $yx^{\partial(y)}=x\,^{\partial(x)}y$.
	\end{itemize}
  Define the map $A\times K\xto{\circ}A$ by $ a\circ x=a\partial(^{a\1}x)$. This
  gives us a crossed semi-bimodule. Moreover, any crossed semi-bimodule for
  which $K$ and $A$ are groups can be obtained in this manner.
\end{Pro}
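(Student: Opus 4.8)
The plan is to verify that the five defining axioms of a crossed semi-bimodule (Definition \ref{mu}) hold for the data $(K,A,\circ,\lambda,\rho)$ with $\circ$ defined by $a\circ x=a\partial(^{a\1}x)$, and then to check that the two constructions are mutually inverse. The compatibility of $\lambda$ and $\rho$ (axiom (1)) is given by hypothesis, so the work is in axioms (2), (3), (4) together with the claim that $\circ$ is a right action of $K$ on the set $A$.

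First I would show that $\circ$ is an action, i.e. $a\circ(xy)=(a\circ x)\circ y$ and $a\circ 1=a$. The latter is immediate since $\partial(1)=1$ (which itself follows from $i)$ with $x=y=1$). For the former, unwinding the definition, $(a\circ x)\circ y = a\partial(^{a\1}x)\partial\bigl(^{(a\partial(^{a\1}x))\1}y\bigr)$, and one reduces this to $a\partial(^{a\1}(xy))$ by applying hypothesis $i)$ (in the form $\partial(uv)=\partial(u)\partial(^{\partial(u)\1}v)$, after substituting $u=\,^{a\1}x$) together with the interplay of the left action with conjugation coming from $ii)$. This is the step I expect to be the main obstacle: it is the one place where all three hypotheses on $\partial$ and both action axioms on $\lambda,\rho$ get used simultaneously, and the bookkeeping with the exponents $a\1$, $\partial(\cdots)\1$ nested inside $\lambda$ is delicate. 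Once $\circ$ is known to be an action, axiom (2) $(ab)\circ(^ax)=a(b\circ x)$ and axiom (3) $(ab)\circ(x^b)=(a\circ x)b$ follow by direct substitution into the definition of $\circ$ and an application of $ii)$: for (2), $(ab)\circ(^ax)=ab\,\partial\bigl(^{b\1 a\1}\,{}^ax\bigr)=ab\,\partial(^{b\1}x)=a(b\circ x)$; for (3) one additionally uses $ii)$ to move the conjugation by $b$ past $\partial$. Axiom (4) $(^ay)x^{b\circ y}=x^b(^{a\circ x}y)$ is where hypothesis $iii)$ enters: since $b\circ y$ and $a\circ x$ only affect the right-action exponents and $iii)$ is precisely the $a=b=1$ shadow of (4), one rewrites both exponents via the definition of $\circ$ and reduces to $iii)$ after absorbing the left-action factors using $i)$.

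Next I would check $\partial(x)=1\circ x$, which is immediate from the definition since $1\circ x=1\cdot\partial(^1 x)=\partial(x)$; this shows the original $\partial$ is recovered. Conversely, starting from an arbitrary crossed semi-bimodule with $K,A$ groups, Lemma \ref{4215042023} gives exactly identities $i)$ (its part iii, reindexed), $ii)$ (its part ii), and $iii)$ (Lemma \ref{1515042023}), while its part i) gives $a\circ x=a\partial(^{a\1}x)$, so the map $\circ$ is reconstructed from $\partial$; hence the construction of this Proposition reproduces the given crossed semi-bimodule, and $\lambda,\rho$ are carried along unchanged. This establishes the "moreover" clause.

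A minor point to dispatch along the way: one must confirm that $K$ being a group (not merely a monoid) is genuinely used, and indeed it is — inverses in $K$ are needed to make sense of $^{a\1}x$ lying in the same footing and, more essentially, $A$ being a group is what lets us cancel the leading $a$ when deriving $ii)$-type identities. I would flag that the only role of the group hypothesis is to let $\circ$ be determined by $\partial$; the axiom verifications themselves are formal once the three $\partial$-identities are in hand. No blank lines, all braces balanced.
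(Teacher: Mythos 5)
Your outline follows the paper's proof in structure: show $\partial(1)=1$ and that $\circ$ is a right action using i), verify axioms (2) and (3) of Definition \ref{mu} by direct substitution plus ii), derive axiom (4) from iii), and obtain the converse from Lemma \ref{4215042023} together with Lemma \ref{1515042023}. However, two of your signposts point at the wrong tools, and the one computation you defer is precisely the nontrivial one. Associativity of $\circ$ does not need ii) at all and is not the ``main obstacle'': with $u={}^{a^{-1}}x$, $v={}^{a^{-1}}y$ one has $a\circ(xy)=a\partial(uv)=a\partial(u)\,\partial({}^{\partial(u)^{-1}}v)$ by i), while $(a\circ x)\circ y=a\partial(u)\,\partial({}^{\partial(u)^{-1}a^{-1}}y)$, and these agree simply because $\lambda$ is a left action.

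The genuine gap is axiom (4). Your plan --- rewrite both exponents via the definition of $\circ$ and reduce to iii) ``after absorbing the left-action factors using i)'' --- does not describe a working argument, since hypothesis i) plays no role there. What is needed is a two-stage reduction. First prove the case $a=1$, i.e. $yx^{b\circ y}=x^b\,{}^{\partial(x)}y$: substitute $y=z^{b}$ (here invertibility in $A$ is used), apply ii) to get $b\circ y=b\,\partial({}^{b^{-1}}z^{b})=\partial(z)b$, use that $\rho(-,b)$ is a monoid endomorphism to obtain $yx^{b\circ y}=(zx^{\partial(z)})^{b}$, then apply iii) and the compatibility $({}^ax)^b={}^a(x^b)$ to land on $x^b\,{}^{\partial(x)}y$. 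Second, for general $a$ write $x={}^at$ with $t={}^{a^{-1}}x$ and use that $\lambda(a,-)$ is a monoid endomorphism to reduce to the previous case, ending with $x^b\,{}^{a\partial({}^{a^{-1}}x)}y=x^b\,({}^{a\circ x}y)$. Without these two steps the hardest axiom is unproved. A further small correction: inverses in $K$ are never used in this Proposition --- only inverses in $A$ enter, both in defining $\circ$ and in the cancellations; $K$ being a group matters only later, e.g.\ in Proposition \ref{431904}. Your converse direction is fine: part i) of Lemma \ref{4215042023} recovers $\circ$ from $\partial$, its part ii) is hypothesis ii), the displayed identity in the proof of its part iii) is hypothesis i), and Lemma \ref{1515042023} is hypothesis iii).
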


\begin{proof}
  By putting $x=1=y$ in i), we see that $\partial(1)=1$. Next, we show that
  $\circ$ defines a right action of $K$ on $A$. Indeed, we have $a\circ 1=
  a\partial(^{a\1}1)=a$. We also have
  \[a\circ (xy)= a\partial(^{a\1}x\, ^{a\1}y).\]
  Putting $u=\,^{a\1}x$ and $v=\,^{a\1}y$, the previous equality takes on the
  form $a\circ (xy)= a\partial(uv)$. Using Identity i) yields
  \[a\circ(xy)=a\partial(u)\partial\left(^{\partial(u)\1}v\right).\]
  On the other hand
  \begin{eqnarray*}
    (a\circ x)\circ y&=&(a\partial( ^{a\1}x))\circ y\\
                     &=&(a\partial(u))\circ y\\
                     &=&a\partial(u)\partial\left( ^{\partial(u)\1a\1}y\right)\\
                     &=&a\partial(u)\partial\left( ^{\partial(u)\1}v\right).
  \end{eqnarray*}
  Comparing these expressions we see that
  \[a\circ (xy)=(a\circ x)\circ y.\]
  Thus, $\circ$ defines the action of $K$ on $A$. It remains to check the
  identities (ii)-(iv) of Definition \ref{mu}. We have
  \begin{eqnarray*}
    (ab)\circ(^ax)&=&ab\partial(^{b\1a\1a}x)\\
                  &=&ab\partial(^{b\1}x)\\
                  &=&a(b\circ x).
  \end{eqnarray*}
  This proves Identity (2) of the Definition of a corssed semi-bimodule
  \ref{mu}. We also have
  \begin{eqnarray*}
    (ab)\circ x^b&=&ab\partial(^{b\1a\1}x^b)\\
                 &=&abb\1\partial(^{a\1}x)b\\
                 &=&a\partial(^{a\1}x)b\\
                 &=&(a\circ x)b.
  \end{eqnarray*}
  and Identity (3) of Definition \ref{mu} follows. Here, we used Identity ii).
  To prove Identity (4) of Definition \ref{mu}, we first show it in the
  particular case when $a=1$. Setting $z=y^{b\1}$ gives us
  \begin{eqnarray*}
    yx^{b\circ y}&=&yx^{b\partial(^{b\1}y)}\\
                 &=&z^bx^{b\partial(^{b\1}z^{b})}\\
                 &=&z^bx^{\partial(z)b}\\
                 &=&(zx^{\partial(z)})^b.
  \end{eqnarray*}
  Using Identity iii) now gives us
  \begin{eqnarray*}
    yx^{b\circ y}&=&(x(^{\partial x}z))^b\\
                 &=&x^b\,^{\partial(x)}z^b\\
                 &=&x^b\,^{\partial(x)}y.
  \end{eqnarray*}
  For the general case, put $t=\,^{a\1}x$. Thus, $x=\,^at$ and we have
  \begin{eqnarray*}
    \,^ayx^{b\circ y}&=&\,^ay(^at^{b\circ y})\\
                     &=&\,^a(y t^{b\circ y}).
  \end{eqnarray*}
  The case $a=1$ now gives us
  \begin{eqnarray*}
    \,^ayx^{b\circ y}&=&\,^a(t^b\,^{\partial(t)}y)\\
                     &=&x^b\,^{a\partial(t)}y\\
                     &=&x^b\,^{a\partial(^{a\1}x)}y\\
                     &=&\, x^b\, (^{a\circ x }y).
  \end{eqnarray*}
  This proves Identity (3) of Definition \ref{mu}.
\end{proof}

\begin{Pro}\label{431904}
  Let $(K,A,\circ)$ be a crossed semi-bimodule such that $K$ and $A$ are groups.
  Recall that we defined a monoid $K^{st}$ in Lemma \ref{221504}, which, as a
  set, is $K$. The monoid structure $\diamond$, however, is given by
  \[x\diamond y:= yx^{\partial(y)}=x\,^{\partial(x)}y,\]
  where $\partial(y)=1\circ y$, as usual.
 
 i) The map $\partial:K^{tw}\to A$ is a homomorphism.
 
 ii) Define a new action of $A$ on the $K^{st}$ by
 \[x^{*a}=\, ^{a\1}x^a,\]
 This gives us 
 \[\partial(x^{*a})=a\1\partial(x)a.\]
 
 iii) The monoid $K^{tw}$ is, indeed, a group.
 
 iv) We have
 \[y\diamond x^{*\partial y}=x\diamond y.\]
\end{Pro}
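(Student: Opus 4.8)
The plan is to obtain parts i) and ii) from results already in hand, to prove iii) by a left-inverse trick, and to settle iv) by a short computation.

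For part i), I will use Lemma~\ref{221504}, which tells us that $(1,x)\bu(1,y)=(1,x\diamond y)$ inside $A\bowtie K$, together with $\partial(z)=1\circ z=d^1_1(1,z)$. Since $d^1_1$ is a monoid homomorphism by Proposition~\ref{21020402023}, it follows that $\partial(x\diamond y)=d^1_1\bigl((1,x)\bu(1,y)\bigr)=d^1_1(1,x)\,d^1_1(1,y)=\partial(x)\partial(y)$, while $\partial(1)=1$; hence $\partial:K^{tw}\to A$ is a homomorphism. (Alternatively, this is Lemma~\ref{4215042023}(iii) rewritten through $x\diamond y=yx^{\partial(y)}$.) For part ii), the formula $\partial(x^{*a})=a\1\partial(x)a$ is precisely Lemma~\ref{4215042023}(ii) with $z=x$. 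I will also check that $*$ is genuinely an action of the group $A$ on the monoid $K^{tw}$, i.e. that $x^{*1}=x$, $(x^{*a})^{*b}=x^{*ab}$ and $(x\diamond y)^{*a}=x^{*a}\diamond y^{*a}$; this is a routine manipulation using that $\lambda,\rho$ are actions, the compatibility $({}^ax)^b={}^a(x^b)$, the fact that ${}^a(-)$ and $(-)^a$ are monoid endomorphisms of $K$, and once more the formula for $\partial(x^{*a})$.

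For part iii), recall that $K^{tw}$ is already a monoid by Lemma~\ref{221504}, so it is enough to produce a left inverse for each element: in a monoid in which every element has a left inverse these are automatically two-sided, since $b\diamond a=1$ and $c\diamond b=1$ force $a=(c\diamond b)\diamond a=c\diamond(b\diamond a)=c$ and hence $a\diamond b=c\diamond b=1$. Since $A$ is a group, the map $\rho(-,\partial(x)):K\to K$ is a bijection with inverse $\rho(-,\partial(x)\1)$, so I may set $y:=(x\1)^{\partial(x)\1}$, where $x\1$ denotes the group inverse of $x$ in $K$; then, using $u\diamond v=vu^{\partial(v)}$ and $(z^a)^b=z^{ab}$, one finds $y\diamond x=x\,y^{\partial(x)}=x\,(x\1)^{\partial(x)\1\partial(x)}=x\,x\1=1$, so $y$ is a left inverse of $x$ and $K^{tw}$ is a group.

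For part iv), I will combine the alternative expression $u\diamond v=u\,{}^{\partial(u)}v$ of Lemma~\ref{1515042023} with the unravelling $x^{*\partial(y)}={}^{\partial(y)\1}\bigl(x^{\partial(y)}\bigr)$ and the left-action law ${}^a({}^bz)={}^{ab}z$, which give $y\diamond x^{*\partial(y)}=y\,{}^{\partial(y)}\bigl({}^{\partial(y)\1}\bigl(x^{\partial(y)}\bigr)\bigr)=y\,{}^{\partial(y)\partial(y)\1}\bigl(x^{\partial(y)}\bigr)=y\,x^{\partial(y)}=x\diamond y$, as claimed. The only place calling for real care is part iii): one must notice that the pertinent $\rho$-action map on $K$ is invertible --- which is exactly where the hypothesis that $A$ is a group is used --- and then appeal to the left-inverse criterion instead of hunting down a two-sided inverse by hand. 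Everything else reduces to bookkeeping with the identities of Definition~\ref{mu} and the earlier lemmas.
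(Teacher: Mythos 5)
Your proposal is correct and follows essentially the same route as the paper: parts i) and ii) come down to Lemma \ref{4215042023}(iii) and (ii) (your detour through $d^1_1$ on $A\bowtie K$ is an equivalent alternative, which you also note), and iv) is the same one-line computation $y\,{}^{\partial(y)}({}^{\partial(y)\1}(x^{\partial(y)}))=yx^{\partial(y)}$. In iii) you construct a left $\diamond$-inverse $(x\1)^{\partial(x)\1}$ via the right action and invoke the one-sided-inverse criterion, the mirror image of the paper's right inverse ${}^{\partial(x)\1}(x\1)$ via the left action; your explicit one-sided-to-two-sided step is a small improvement, since the paper asserts the two-sided inverse without it.
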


\begin{proof}
  i) We have
  \begin{eqnarray*}
    \partial(x\diamond y)&=&\partial(yx^{\partial(y)})\\
                         &=&\partial(x)\partial(y),
  \end{eqnarray*}
	according to part iii) of Lemma \ref{4215042023}.
	
	ii) This is a restatement of part ii) of Lemma \ref{4215042023}.
	
	iii) Put
  \[x^\flat=\,^{\partial(x)\1}(x\1)\]
	We have
  \begin{eqnarray*}
    x\diamond x^\flat&=&x(^{\partial(x)}(x^\flat))\\
                     &=&x(x^{-1})\\
                     &=&1.
  \end{eqnarray*}
	Thus, the $\diamond$-inverse of $x$ is $x^\flat$.
	
	iv) We have
  \begin{eqnarray*}
  y\diamond x^{*\partial y}&=&y\diamond(^{{\partial y}\1}x^{\partial(y)})\\
                           &=&y\,^{\partial(y)}(^{{\partial y}\1}x^{\partial(y)})\\
                           &=&y x^{\partial(y)}.
  \end{eqnarray*}
\end{proof} 

\begin{Co}
  Let $(K,A,\circ)$ be a crossed semi-bimodule such that $K$ and $A$ are
  groups. Then $(K^{tw},A,\partial)$ is a crossed module.
\end{Co}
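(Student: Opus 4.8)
The plan is to verify the two defining equalities of a crossed module (Definition \ref{121904}) for the triple $(K^{tw}, A, \partial)$, using the action $x^{*a} = {}^{a\1}x^a$ as the right action of $A$ on $K^{tw}$. Both the homomorphism property of $\partial$ on $K^{tw}$ and the fact that $K^{tw}$ is a group have already been established in Proposition \ref{431904}, so the bulk of the work is checking that $x^{*a}$ is genuinely a right action of $A$ on the monoid $K^{tw}$, and then checking conditions (1) and (2) of Definition \ref{121904}.

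First I would check that $*$ is a right action of the group $A$ on $K^{tw}$. That it respects the unit of $A$, i.e.\ $x^{*1} = x$, and that $(x^{*a})^{*b} = x^{*(ab)}$, both follow immediately from the corresponding properties of the commuting two-sided action $({}^ax, x^a)$ on $K$ together with the compatibility $({}^ax)^b = {}^a(x^b)$ of Definition \ref{mu}(1). What genuinely needs Proposition \ref{431904}(iv) is that $x \mapsto x^{*a}$ is an endomorphism of $(K^{tw}, \diamond)$: one must show $(x \diamond y)^{*a} = x^{*a} \diamond y^{*a}$. Expanding the left side as ${}^{a\1}(y x^{\partial(y)})^a = {}^{a\1}(y^a) \, {}^{a\1}(x^{\partial(y)})^a$ and rewriting the exponent $\partial(y)$ using Lemma \ref{4215042023}(ii) (which tells us $\partial({}^{a\1}y^a) = a\1\partial(y)a = \partial(y^{*a})$), this should collapse precisely to $y^{*a} \diamond x^{*a}$ in the twisted multiplication — here I expect to need to be careful with the order, since $\diamond$ reverses the naive order of multiplication.

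Next comes condition (1): $\partial(x^{*a}) = a\1 \partial(x) a$. This is exactly part (ii) of Proposition \ref{431904}, so nothing further is needed. Finally, condition (2): $y^{*\partial(x)} = x^{\flat} \diamond y \diamond x$, where the multiplication and inverse $x^{\flat}$ are those of $K^{tw}$. Equivalently, writing it as $x \diamond y^{*\partial(x)} = y \diamond x$ (which is how Proposition \ref{431904}(iv) phrases it, modulo swapping the roles of $x$ and $y$), this is precisely part (iv) of Proposition \ref{431904}. So both crossed-module axioms are already packaged in that proposition, and the corollary is essentially an assembly of Lemma \ref{4215042023}, Lemma \ref{221504}, and Proposition \ref{431904}.

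The main obstacle I anticipate is the verification that $x \mapsto x^{*a}$ is an endomorphism of the twisted monoid $K^{tw}$ — i.e.\ the one piece of the "action on the monoid $K^{tw}$" claim that is not purely formal. The delicate point is that $\diamond$ is defined by $x \diamond y = y x^{\partial(y)}$, with the exponent depending on the second argument, so conjugation by $a$ interacts with $\partial$ through Lemma \ref{4215042023}(ii); one has to confirm that the shift $\partial(y) \mapsto a\1\partial(y)a$ induced by conjugation matches the $\partial$ of the conjugated element $y^{*a}$, which is exactly what makes the computation close up. Everything else reduces to quoting the earlier results verbatim.
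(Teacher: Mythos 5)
Your proposal is correct and follows essentially the same route as the paper: quote Proposition \ref{431904} for the group structure on $K^{tw}$, the homomorphism property of $\partial$, and axioms (1) and (2) of Definition \ref{121904} (via parts ii) and iv)), with the only real work being the check that $x\mapsto x^{*a}$ respects $\diamond$, done exactly as the paper does via Lemma \ref{4215042023}(ii). One small slip: the expansion of $(x\diamond y)^{*a}$ collapses to $x^{*a}\diamond y^{*a}$ (whose naive product is $y^{*a}\,(x^{*a})^{\partial(y^{*a})}$), not to $y^{*a}\diamond x^{*a}$ as written, but your own caveat about $\diamond$ reversing the naive order already points to the correct bookkeeping.
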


\begin{proof}
  We already proved that $K^{tw}$ is a group and $\partial$ is a homomorphism,
  see Proposition \ref{431904}. The group $A$ acts on $K$ via $x^{*a}=\,
  ^{a\1}x^a$. This action respects the $\diamond$ operation. Indeed, we have
  \begin{eqnarray*}
    (x\diamond y)^{*a}&=&(yx^{\partial(y)})^{*a}\\
                      &=&\,(^{a\1}y^a)\,(^{a\1}(x^{\partial y})^a).
  \end{eqnarray*}
  We also have
  \begin{eqnarray*}
    (x^{*a})\diamond(y^{*a})&=&(^{a\1}x^{a})\diamond(^{a\1}y^{a})\\
                            &=&(^{a\1}y^{a})(^{a\1}x^{a})^{\partial(^{a\1}y^{a})}\\
                            &=&(^{a\1}y^{a})(^{a\1}x^{\partial(y)a}).
  \end{eqnarray*}
  Comparing these expressions, we see that $(x\diamond y)^{*a}=(x^{*a})\diamond
  (y^{*a})$. Finally Identities i) and ii) of Definition \ref{121904} hold,
  thanks to parts ii) and iv) of Proposition \ref{431904}.
\end{proof}

We constructed a functor $\Xmod\to\Xbsmod$ in Lemma \ref{07052023}, whose image
lies in the subcategory $\Xbsmodgr$ whose objects are such crossed
semi-bimodules, for which both $K$ and $A$ are groups. Moreover, in Proposition
\ref{431904}, we constructed the functor $\Xbsmodgr\to\Xmod$ in the reverse
direction. It is easy to check that the composite
\[\Xmod\to\Xbsmodgr\to\Xmod\]
is isomorphic to the identity functor. The next Lemma will shows that, though
the composite
\[\Xbsmodgr\to\Xmod\to\Xbsmodgr\]
is in general not isomorphic to the identity, it is weakly isomorphic to it.

\begin{Pro}
  Let $(K,A,\circ)$ be a crossed semi-bimodule such that $K$ and $A$ are groups.
  Denote by $(K^{tw},A,\partial)$ the crossed semi-bimodule corresponding to the
  crossed module $(K^{tw},A,\circ)$, constructed in Proposition \ref{431904}.
  
  The maps $\id:A\to A$ and $\gamma:A\times K^{tw}\to K$ define a weak
  isomorphism $(K^{tw},A,\partial)\to(K,A,\partial)$, where $\gamma(a,x)=\,^ax$.
\end{Pro}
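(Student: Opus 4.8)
The plan is to verify the three identities in the definition of a weak morphism for the pair $(\id_A,\gamma)$ with $\gamma(a,x)={}^ax$, and then to exhibit an inverse weak morphism. Throughout I would keep in mind that in the crossed semi-bimodule $(K^{tw},A,\partial)$ corresponding to the crossed module of Proposition \ref{431904} the underlying monoid is $(K^{tw},\diamond)$, the left $A$-action is trivial, the right $A$-action is $x^{*a}={}^{a\1}x^a$, and the structure map is $a\circ x=a\partial(x)$; whereas in the target $(K,A,\partial)$ the left and right actions and $\circ$ are the original ones, with $a\circ x=a\partial({}^{a\1}x)$ by Lemma \ref{4215042023}(i).

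First I would dispose of the identity $\kappa(a)\circ\gamma(a,x)=\kappa(a\circ x)$: with $\kappa=\id_A$ this reads $a\circ{}^ax=a\partial(x)$, which is exactly identity (2) of Definition \ref{mu} with $b=1$, since $a\circ{}^ax=a(1\circ x)=a\partial(x)$. For the identity $\gamma(a,xy)=\gamma(a,x)\gamma(a,y)$ — that is, that $x\mapsto{}^ax$ is multiplicative from $(K^{tw},\diamond)$ to $(K,\cdot)$ — I would expand ${}^a(x\diamond y)={}^a(yx^{\partial y})=({}^ay)(({}^ax)^{\partial y})$, using that ${}^a(-)$ is a monoid endomorphism of $K$ compatible with the right action (identity (1) of Definition \ref{mu}), and then identify the right-hand side with $({}^ax)({}^ay)$ by means of Lemma \ref{1515042023} together with the equality $a\circ{}^ax=a\partial(x)$ just used.

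The substantial part is the third identity, which is the assertion that $(a,x)\mapsto(a,{}^ax)$ is a monoid homomorphism $A\bowtie K^{tw}\to A\bowtie K$ in the notation of Proposition \ref{21020402023}. Here I would write out the $K^{tw}$-component of the product $(a,x)\bu(b,y)$ formed in $A\bowtie K^{tw}$ — which by the definitions of $\bowtie$ and of $\diamond$ equals $y\diamond x^{*(b\partial y)}$ — together with the $K$-component of the product $(a,{}^ax)\bu(b,{}^by)$ formed in $A\bowtie K$, and reconcile the two. The ingredients should be $\partial(x^{*c})=c\1\partial(x)c$ and the relation $y\diamond x^{*\partial y}=x\diamond y$ from Proposition \ref{431904}(ii) and (iv), identity (4) of Definition \ref{mu}, and once more identity (2) to evaluate $a\circ{}^ax$. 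I expect the bookkeeping of the conjugations by elements of $A$ introduced by $\partial(x^{*c})$ to be the point requiring the most care.

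Finally, to upgrade $(\id_A,\gamma)$ to a weak isomorphism: each $\gamma(a,-)={}^a(-)\colon K^{tw}\to K$ is bijective with set-theoretic inverse ${}^{a\1}(-)$, so put $\gamma'(a,x)={}^{a\1}x$. A computation symmetric to the preceding ones — now invoking Lemma \ref{4215042023}(i) for the structure map of $(K,A,\partial)$ — shows that $(\id_A,\gamma')$ is a weak morphism $(K,A,\partial)\to(K^{tw},A,\partial)$, and the identities ${}^{a\1}({}^ax)=x={}^a({}^{a\1}x)$ show that the two composites $(\id_A,\gamma')\circ(\id_A,\gamma)$ and $(\id_A,\gamma)\circ(\id_A,\gamma')$ are the identity weak morphisms. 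Hence $(\id_A,\gamma)$ is an isomorphism in $\Xbsmod_w$, that is, a weak isomorphism.
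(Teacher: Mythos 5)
Your overall route is the same as the paper's (the paper merely calls the verification of the weak-morphism identities a straightforward computation and exhibits the inverse $\gamma^{[-1]}(a,x)={}^{a^{-1}}x$, exactly as you do), and your handling of condition (2) and of the inverse is fine. However, your sketch for condition (1) has a genuine gap. From $x\diamond y=yx^{\partial(y)}=x\,{}^{\partial(x)}y$ one gets ${}^a(x\diamond y)=({}^ax)\,({}^{a\partial(x)}y)$; the application of Lemma \ref{1515042023} you propose does not produce $({}^ax)({}^ay)$, because applied to the pair ${}^ax,{}^ay$ it yields the exponent $\partial({}^ay)$, not $\partial(y)$. In fact the identity you are trying to prove, ${}^a(x\diamond y)=({}^ax)({}^ay)$, is false in general: at $a=1$ it reads $x\diamond y=xy$, i.e.\ $yx^{\partial(y)}=xy$, which would force the crossed semi-bimodule to be a crossed semi-module. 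A concrete counterexample: take $K=A=G$ a nonabelian group, ${}^ax=axa^{-1}$, trivial right action, $a\circ x=x^{-1}a$ (so $\partial(x)=x^{-1}$); this satisfies all four conditions of Definition \ref{mu}, one has $x\diamond y=yx$, and ${}^a(x\diamond y)=ayxa^{-1}\neq axya^{-1}=({}^ax)({}^ay)$ whenever $xy\neq yx$.

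What your computation actually gives is $\gamma(a,x\diamond y)=\gamma(a,x)\,\gamma(a\partial(x),y)=\gamma(a,x)\,\gamma(a\circ x,y)$, with $a\circ x=a\partial(x)$ taken in the source $(K^{tw},A,\partial)$. This is precisely the compatibility with composition that the induced internal functor needs, and it is evidently the intended reading of condition (1) of a weak morphism; the literal condition (1) in the paper (and likewise the exponent $\kappa(a)\circ\gamma(b,y)$ in condition (3), which should be $\kappa(b)\circ\gamma(b,y)$ to match the multiplication in $A\bowtie K$ — the form you in fact use when you interpret (3) as homomorphy of $(a,x)\mapsto(a,{}^ax)$ from $A\bowtie K^{tw}$ to $A\bowtie K$) appears to be a misprint. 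So you must either work with the corrected conditions, in which case your plan for (3) and the inverse argument go through, or accept that with the definition as printed the statement is not provable; in either case the step ``identify with $({}^ax)({}^ay)$ via Lemma \ref{1515042023}'' has to be abandoned rather than filled in.
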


\begin{proof}
  The fact that these maps define a weak morphism is a straightforward
  computation. It is an isomorphism as the inverse morphism is given by
  $\id:A\to A$ and $\gamma^{[-1]}:A\times K\to K^{tw}$, where the latter is
  defined by $\gamma^{[-1]}(a,x)=\,{a^{-1}}x$.
\end{proof}

\section{A construction related to rank two commutative algebras}\label{6quad}

The notion of crossed semi-bimodules arose during my work on rank two
commutative algebras \cite{p5}. In more detail, let $R$ be a commutative ring
and $p,q\in R$, such that
\[pq+2=0.\]
We define a crossed semi-bimodule, denoted by $\Qu(R):=(\sK(R),\sA(R),\circ)$.
Here, $\sK(R)$ is the multiplicative monoid of $2\times 2$ matrices of the form
\[\begin{pmatrix}1&r\\ 0& s\end{pmatrix},\]
where $r,s\in R$. The monoid $\sA(R)$, as a set, is simply $R^2$. Elements of
$\sA(R)$ are denoted by $[a,b]$. The monoid operation is defined by
\[[a,b][c,d]=[ac,a^2d+bc^2+p^2bd].\]
One easily sees that in this way, we obtain a commutative monoid with $[1,0]$ as
the unit.

\begin{Le}
  For elements
  \[k=\begin{pmatrix}1 & r \\ 0 & s\end{pmatrix}
  \in\sK(R)\quad{\rm and}\quad m=[a,b]\in\sA(R),\]
	we set 
  \[m\circ k=[as-pr,s^2b-qrsa-r^2].\]
	This defines an action of the monoid $\sK(R)$ on the set $\sA(R)$.
\end{Le}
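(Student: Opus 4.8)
The plan is to verify directly that the formula $m\circ k=[as-pr,\,s^2b-qrsa-r^2]$ satisfies the two axioms of a right monoid action of $\sK(R)$ on the set $\sA(R)$: namely that $m\circ I=m$ for the identity matrix $I=\begin{pmatrix}1&0\\0&1\end{pmatrix}$, and that $(m\circ k)\circ k'=m\circ(kk')$ for all $k,k'\in\sK(R)$ and $m\in\sA(R)$. The unit axiom is immediate: setting $r=0$, $s=1$ gives $m\circ I=[a\cdot 1-0,\,1\cdot b-0-0]=[a,b]=m$.

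For the associativity axiom, I would first record the product in $\sK(R)$: if $k=\begin{pmatrix}1&r\\0&s\end{pmatrix}$ and $k'=\begin{pmatrix}1&r'\\0&s'\end{pmatrix}$, then $kk'=\begin{pmatrix}1&r'+rs'\\0&ss'\end{pmatrix}$ (reading the composition in the order matching the $\rho$-convention of the paper). Then on the one hand $m\circ(kk')=[a(ss')-p(r'+rs'),\,(ss')^2b-q(r'+rs')(ss')a-(r'+rs')^2]$. On the other hand, writing $m\circ k=[a',b']$ with $a'=as-pr$ and $b'=s^2b-qrsa-r^2$, one computes $(m\circ k)\circ k'=[a's'-pr',\,s'^2b'-qr's'a'-r'^2]$ and substitutes the expressions for $a',b'$. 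Comparing the first coordinates gives $(as-pr)s'-pr'=ass'-prs'-pr'$, which matches at once. The second coordinates require expanding $s'^2(s^2b-qrsa-r^2)-qr's'(as-pr)-r'^2$ and checking it equals $s^2s'^2b-q(r'+rs')ss'a-(r'+rs')^2$; here the relation $pq+2=0$, equivalently $pq=-2$, will be needed to reconcile the cross terms.

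The arithmetic for the second coordinate is where the one subtlety lies. Expanding the target, $-(r'+rs')^2=-r'^2-2rr's'-r^2s'^2$ and $-q(r'+rs')ss'a=-qr'ss'a-qrs^2s'a$. Expanding the candidate, $s'^2\cdot s^2b=s^2s'^2b$ (matches the $b$-term), $s'^2\cdot(-qrsa)=-qrss'^2a$, $s'^2\cdot(-r^2)=-r^2s'^2$ (matches $-r^2s'^2$), $-qr's'\cdot as=-qr'ss'a$ (matches $-qr'ss'a$), $-qr's'\cdot(-pr)=pqrr's'$, and the leftover $-r'^2$ (matches $-r'^2$). So after cancelling the matching terms, the identity reduces to
\[
-qrss'^2a+pqrr's' \;=\; -2rr's'-qrs^2s'a.
\]
The $a$-terms $-qrss'^2a$ and $-qrs^2s'a$ do not obviously cancel, which signals that I have mismatched the multiplication order in $\sK(R)$; redoing the computation with $kk'=\begin{pmatrix}1&r+r's\\0&ss'\end{pmatrix}$ instead (i.e. the other convention for $\rho$) makes the $a$-terms agree, leaving $pqrr's'=-2rr's'$, which holds precisely because $pq=-2$. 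So the key step — and the main obstacle — is getting the $\sK(R)$-multiplication order consistent with the action convention and then invoking $pq+2=0$ on the purely $r$-dependent cross term.

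Concretely, the steps I would carry out are: (1) state the matrix product $kk'$ in $\sK(R)$ with the convention matching the later structure maps; (2) check the unit axiom, which is trivial; (3) compute $m\circ(kk')$ and $(m\circ k)\circ k'$ as elements of $R^2$; (4) compare first coordinates (immediate); (5) expand both second coordinates, cancel the terms that agree formally, and observe the surviving discrepancy is $(pq+2)rr's'=0$. The verification is routine polynomial algebra once the conventions are pinned down, and the relation $pq+2=0$ enters at exactly one place.
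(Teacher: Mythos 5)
Your overall strategy (check the unit axiom, then compare the two coordinates of $(m\circ k)\circ k'$ and $m\circ(kk')$ and invoke $pq=-2$ once) is exactly the paper's, but the execution contains a genuine error that propagates into a wrong conclusion. In expanding the target you wrote $-q(r'+rs')ss'a=-qr'ss'a-qrs^2s'a$; the correct expansion is $-q(r'+rs')ss'a=-qr'ss'a-qrss'^2a$, since $(rs')(ss')=rss'^2$. With that corrected, the $a$-terms cancel against the $-qrss'^2a$ coming from $s'^2\cdot(-qrsa)$, and under the \emph{standard} matrix product $kk'=\begin{pmatrix}1&r'+rs'\\0&ss'\end{pmatrix}$ (the one the paper uses) the only surviving discrepancy is $pqrr's'$ versus $-2rr's'$, i.e.\ $(pq+2)rr's'=0$. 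There is no mismatch of conventions to repair.

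Your proposed ``fix'' --- switching to $kk'=\begin{pmatrix}1&r+r's\\0&ss'\end{pmatrix}$, which is the opposite-order product $k'k$ --- actually breaks the identity: the first coordinates, which you had already verified agree under the standard convention, become $ass'-prs'-pr'$ versus $ass'-pr-pr's$, and these differ in general (take $r=1$, $r'=0$, $s'=2$, $p\neq0$). The second coordinates fail as well. So the argument as written proves the associativity axiom for a multiplication on $\sK(R)$ under which it is false, while dismissing the convention under which it is true. The repair is simple: redo the second-coordinate expansion with $(rs')(ss')=rss'^2$, keep the ordinary matrix multiplication on $\sK(R)$, and the verification closes exactly as you intended, with $pq+2=0$ used at the single cross term $rr's'$.
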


\begin{proof}
  Taking $r=0$ and $s=1$, we see that $m\circ 1=m$. Next, take $k_1=
  \begin{pmatrix}1&r_1\\ 0& s_1\end{pmatrix}$.
  We have
	\begin{eqnarray*}
  (m\circ k)\circ k_1&=&[as-pr,s^2b-qrsa-r^2]\circ
  \begin{pmatrix} 1 & r_1 \\ 0 & s_1\end{pmatrix}\\
                    &=&[(as-pr)s_1-pr_1,s_1^2(s^2b-qrsa-r^2)-qr_1s_1(as-pr)-r_1^2].
	\end{eqnarray*}
	On the other, hand we have
	\begin{eqnarray*}
    m\circ(kk_1)&=&[a,b]\circ\begin{pmatrix} 1&r_1 + rs_1\\ 0&ss_1\end{pmatrix}\\
                &=&[ass_1-p(r_1+rs_1),s^2s_1^2b-q(r_1+rs_1)ss_1a-(r_1+rs_1)^2].
	\end{eqnarray*}
  Comparing these expression and using the fact that $pq=-2$, we see that
  \[(m\circ k)\circ k_1=m\circ(kk_1),\]
	implying the lemma.
\end{proof}

\begin{Pro}
  For elements
  \[k=\begin{pmatrix} 1 & r \\ 0 & s\end{pmatrix}\in\sK(R)\quad {\rm and}\quad
  m=[a,b]\in\sA(R)\]
	we set 
  \[k^m=\begin{pmatrix} 1 & r a \\ 0 & s\end{pmatrix}=\ ^mk.\]
	In this way, we obtain a crossed semi-bimodule $(\sK(R),\sA(R),\circ)$.
\end{Pro}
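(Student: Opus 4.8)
The plan is to check, one ingredient at a time, the data demanded by Definition \ref{mu}. The map $\circ$ has already been shown, in the preceding Lemma, to be a right action of the monoid $\sK(R)$ on the set $\sA(R)$, so what remains is: that $\lambda\colon(m,k)\mapsto{}^mk$ is a left action of $\sA(R)$ on the monoid $\sK(R)$ by monoid endomorphisms; that $\rho\colon(k,m)\mapsto k^m$ is a right action with the same property; and that the four identities (1)--(4) hold. Throughout I write $m=[a,b]$, $m'=[a',b']\in\sA(R)$ and $k=\begin{pmatrix}1&r\\0&s\end{pmatrix}$, $k'=\begin{pmatrix}1&r'\\0&s'\end{pmatrix}\in\sK(R)$, and I will use the standing relation $pq+2=0$ in the form $p^2q=-2p$.

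The key structural remark is that both $\lambda$ and $\rho$ touch only the upper-right entry of a matrix, multiplying it by the first coordinate of the element of $\sA(R)$: indeed ${}^mk=k^m=\begin{pmatrix}1&ra\\0&s\end{pmatrix}$. Since the first coordinate of the product $mm'$ in $\sA(R)$ is $aa'$, and since in a product of two matrices of $\sK(R)$ the new upper-right entry is additive in the upper-right entries of the factors, a one-line computation gives ${}^1k=k$, ${}^m1=1$, ${}^m({}^{m'}k)={}^{mm'}k$ and ${}^m(k_1k_2)=({}^mk_1)({}^mk_2)$, and similarly for $\rho$ by commutativity of $R$. Hence $\lambda$ makes $\sK(R)$ a left $\sA(R)$-monoid and $\rho$ makes it a right one, and Identity (1), $({}^mk)^{m'}={}^m(k^{m'})$, is immediate, both sides being $\begin{pmatrix}1&raa'\\0&s\end{pmatrix}$.

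For Identities (2) and (3) I would expand both sides directly, using the explicit product of $\sA(R)$ and the explicit formula for $\circ$. In each case the two first coordinates agree on the nose (they are $a(a's-pr)$ for (2) and $a'(as-pr)$ for (3)); and after expanding the second coordinates the difference of the two sides collapses to a sum of exactly two monomials, of the form $-2p\,a'srb$ and $-p^2q\,a'srb$ for (2), and $-2p\,asrb'$ and $-p^2q\,asrb'$ for (3). These cancel precisely because $p^2q=-2p$. This is the main computational step, and the only place where the hypothesis $pq+2=0$ is actually used.

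Finally, Identity (4), $({}^mk')\,k^{m'\circ k'}={k^{m'}}\,({}^{m\circ k}k')$, looks the most intimidating but in fact simplifies: the matrix $k^{m'\circ k'}$ depends on $m'\circ k'$ only through its first coordinate $a's'-pr'$, and likewise ${}^{m\circ k}k'$ depends on $m\circ k$ only through its first coordinate $as-pr$. Carrying out the two matrix products, both sides turn out to equal $\begin{pmatrix}1&ra's'+r'as-prr'\\0&ss'\end{pmatrix}$, so they coincide; no use of $pq+2=0$ is needed here. Assembling these verifications establishes that $(\sK(R),\sA(R),\circ)$ is a crossed semi-bimodule.
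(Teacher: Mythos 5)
Your proof is correct and follows essentially the same route as the paper: direct expansion of the structure maps, with the relation $pq+2=0$ (in the form $p^2q=-2p$) entering only in the verification of identities (2)--(3) of Definition \ref{mu}, and identity (4) reducing to a matrix computation needing no relation at all. In fact you are more complete than the paper, which explicitly writes out only identities (2) and (4) and leaves the action axioms, identity (1), and identity (3) (which also follows from (2) by commutativity of $\sA(R)$ and ${}^mk=k^m$) implicit.
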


\begin{proof}
  For $n=[c,d]$, we have
	\begin{eqnarray*}
    (mn)\circ(^mk)&=&[ac,a^2d+bc^2+p^2bd]\circ
    \begin{pmatrix} 1 & ra \\ 0 & s\end{pmatrix}\\
                  &=&[acs-pra,s^2(a^2d+bc^2+p^2bd)-qarsac-r^2a^2]
  \end{eqnarray*}
	On the other hand
	\begin{eqnarray*}
    m(n\circ k)&=&[a,b][cs-pr,s^2d-qrsc-r^2]\\
               &=&[acs-par,a^2(s^2d-qrsc-r^2)+b(cs-pr)^2+p^2b(s^2d-qrsc-r^2)].
  \end{eqnarray*}
	Since $pq=-2$, we obtain
  \[(mn)\circ (^ms)=m(n\circ s).\]
	For
  \[h=\begin{pmatrix} 1 & u \\ 0 & v \end{pmatrix}\in K(R),\]
	we have
	\begin{eqnarray*}
    (^mh)k^{n\circ h} &=&
    \begin{pmatrix}1&ua\\ 0& v\end{pmatrix}
    \begin{pmatrix}1&crv-pur\\ 0& s\end{pmatrix}\\
                    &=&
    \begin{pmatrix}1& crv-pur+asu\\ 0& sv\end{pmatrix}\\
                    &=&
    \begin{pmatrix}1&rc\\ 0 & s\end{pmatrix}
    \begin{pmatrix}1& asu-pru\\ 0&v\end{pmatrix}\\
                    &=& k^n(^{m\circ k}h)
	\end{eqnarray*}
\end{proof}

The internal category in $\bMon$, corresponding to $(\sK(R),\sA(R),\circ)$, is
intimately related to the Galois algebras over the Hopf $R$-algebra $J^{pq}$,
thanks to \cite{p5}. The latter is freely generated by $1$ and $x$ as an
$R$-module. The multiplication and comultiplication in $J^{pq}$ is given by
\[x^2=qx,\quad\Delta(x)=1\otimes x+x\otimes 1+p(x\otimes x).\]

\end{document}